\theoremstyle{definition} 
\newtheorem{Unity}{Unity}[section] 
\newtheorem*{Definition*}{Definition} 
\newtheorem{Definition}[Unity]{Definition} 
\theoremstyle{plain} 
\newtheorem*{Theorem*}{Theorem}
\newtheorem{Theorem}[Unity]{Theorem}
\newtheorem{Proposition}[Unity]{Proposition}
\newtheorem{Corollary}[Unity]{Corollary}
\newtheorem{Lemma}[Unity]{Lemma}
\theoremstyle{remark} 
\newtheorem*{Remark*}{Remark}
\newtheorem{Remark}[Unity]{Remark}
\numberwithin{Unity}{section}
\newcommand{\Z}{\mathbb{Z}}
\newcommand{\R}{\mathbb{R}}
\newcommand{\E}{{\mathscr E}}
\newcommand{\F}{{\mathscr F}}
\newcommand{\Ox}{{\mathscr O}}
\newcommand{\Pg}{{\mathscr P}}
\newcommand{\HNP}{\mathrm{HNP}}
\newcommand{\Spec}{\mathrm{Spec}}
\newcommand{\Jac}{\mathrm{Jac}}
\newcommand{\rank}{\mathrm{rk}}
\newcommand{\degree}{\mathrm{deg}}
\newcommand{\Omg}{\mathrm{\Omega}}
\newcommand{\ConPg}{\frak{ConPg}} 
\begin{document}

\title{The Morphism Induced by Frobenius Push-Forwards}
\author{Lingguang Li}
\address{School of Mathematical Sciences, Fudan University, Shanghai, P. R. China}
\email{LG.Lee@amss.ac.cn}
\maketitle

\begin{abstract} Let $X$ be a smooth projective curve of genus $g(X)\geq 1$ over an algebraically
closed field $k$ of
characteristic $p>0$ and $F_{X/k}:X\rightarrow X^{(1)}$ be the relative Frobenius morphism.
Let $\mathfrak{M}^{s(ss)}_X(r,d)$ (resp. $\mathfrak{M}^{s(ss)}_{X^{(1)}}(r\cdot p,d+r(p-1)(g-1))$)
be the moduli space of (semi)-stable vector bundles of rank $r$ (resp. $r\cdot p$) and degree $d$
(resp. $d+r(p-1)(g-1)$) on $X$ (resp. $X^{(1)}$). We show that the set-theoretic map
$S^{ss}_{\mathrm{Frob}}:\mathfrak{M}^{ss}_X(r,d)\rightarrow\mathfrak{M}^{ss}_{X^{(1)}}(r\cdot p,d+r(p-1)(g-1))$
induced by $[\E]\mapsto[{F_{X/k}}_*(\E)]$ is a proper morphism. Moreover, if $g(X)\geq 2$,
the induced morphism
$S^s_{\mathrm{Frob}}:\mathfrak{M}^s_X(r,d)\rightarrow\mathfrak{M}^s_{X^{(1)}}(r\cdot p,d+r(p-1)(g-1))$
is a closed immersion. As an application, we obtain that the locus of moduli space $\mathfrak{M}^{s}_{X^{(1)}}(p,d)$ consists of stable vector bundles whose Frobenius pull back have maximal Harder-Narasimhan Polygon is isomorphic to Jacobian variety $\Jac_X$ of $X$.
\end{abstract}

\section{Introduction}

Let $k$ be an algebraically closed field of characteristic $p>0$, $X$ a smooth projective curve of genus $g$ over $k$.
The absolute Frobenius morphism $F_X:X\rightarrow X$ is induced by
$\Ox_X\rightarrow \Ox_X$, $f\mapsto f^p$. Let $F_{X/k}:X\rightarrow X^{(1)}:=X\times_kk$
denote the relative Frobenius morphism of $X$ over $k$. Let $\mathfrak{M}^{s(ss)}_X(r,d)$ be the moduli space of
(semi)-stable vector bundles of rank $r$ and degree $d$ on $X$.

It is well known that the (semi)-stability of vector bundles is not preserved by Frobenius pull back $F^*_X$. Therefore,
the set-theoretic map
$$V_{r,d}:\mathfrak{M}^{ss}_X(r,d)\dashrightarrow \mathfrak{M}^{ss}_X(r,pd)$$
$$[\E]\mapsto[F^*_X(\E)]$$
is not well-defined on the whole moduli space
$\mathfrak{M}^{ss}_X(r,d)$. Denote
$$U^{ss}_X(r,d):=\{[\E]\in\mathfrak{M}^{ss}_X(r,d)~|~F^*_X(\E)~\text{is
a semi-stable vector bundle}\}.$$ Then
$U^{ss}_X(r,d)\subseteq\mathfrak{M}^{ss}_X(r,d)$ is an open
sub-variety, and $V_{r,d}|_{U^{ss}_X(r,d)}:U^{ss}_X(r,d)\rightarrow
\mathfrak{M}^{ss}_X(r,pd)$ is a well defined morphism (See the
proof of \cite[Proposition 9]{Osserman06}). The rational
map $V_{r,d}$ is called \emph{generalized Verschiebung rational
map}.

On the other hand, the (semi)-stability of Frobenius direct image has been study by many mathematicians.
\begin{itemize}
\item[(i)] H. Lange and C. Pauly \cite[Proposition 1.2]{LangePauly08} showed that if $g\geq 2$, ${F_{X/k}}_*(\mathscr{L})$ is stable on $X^{(1)}$ for any line bundle $\mathscr{L}$ on $X$;
\item[(ii)] V. Mehta and C. Pauly \cite[Theorem 1.1]{MehtaPauly07} proved that if $g\geq 2$, then for any semi-stable bundle $\E$ on $X$, ${F_{X/k}}_*(\E)$ is also semistable.
\item[(iii)] X. Sun \cite[Theorem 2.2]{Sun08} showed that if $g\geq 1$, than ${F_{X/k}}_*(\E)$ is semi-stable whenever $\E$ is semi-stable on $X$. Moreover, if $g\geq 2$, then ${F_{X/k}}_*(\E)$ is stable whenever $\E$ is stable.
\end{itemize}

In section 3, we show that the set-theoretic map
\begin{eqnarray*}
S^{ss}_{\mathrm{Frob}}:\mathfrak{M}^{ss}_X(r,d)&\rightarrow&\mathfrak{M}^{ss}_{X^{(1)}}(r\cdot p,d+r(p-1)(g-1))\\
\/[\E\/]&\mapsto&\/[{F_{X/k}}_*(\E)\/]
\end{eqnarray*}
is a  proper morphism. Moreover, if $g(X)\geq 2$, the induced morphism $$S^s_{\mathrm{Frob}}:\mathfrak{M}^s_X(r,d)\rightarrow\mathfrak{M}^s_{X^{(1)}}(r\cdot p,d+r(p-1)(g-1))$$ is a closed immersion (Theorem \ref{Lee}).

Let $\E$ a vector bundle on $X$, we consider the Harder-Narasimhan filtration of $\E$
$$\E^{\mathrm{HN}}_{\bullet}:0=\E_m\subset\E_{m-1}\subset\cdots\subset\E_1\subset\E_0=\E.$$
For any subbundle $\E_i$, we may associate to it the point $(\rank(\E_i),\degree(\E_i))$ in the plane $\R^2$, and we connect point $(\rank(\E_i),\degree(\E_i))$ to
point $(\rank(\E_{i-1}),\degree(\E_{i-1}))$ successively by line segments for $0\leq i\leq m$. Then we get a convex polygon in the plane $\R^2$ which we call the \emph{Harder-Narasimhan Polygon} of $\E$, denote by $\HNP(\E)$.

Let $\ConPg(r,d)$ be the category of convex polygons with starting point at $(0,0)$ and terminal point $(r,d)$, there is a natural partial order structure, denote by "$\succcurlyeq$", on $\ConPg(r,d)$. Let $\Pg_1,\Pg_2\in\ConPg(r,d)$, we say $\Pg_1\succcurlyeq\Pg_2$ if and only if $\Pg_1$ lies on or above $\Pg_2$.

Consider the natural map
\begin{eqnarray*}
\mathfrak{M}^{s}_{X^{(1)}}(r,d)&\rightarrow&\ConPg(r,pd)\\
\E&\mapsto&\HNP(F^*_{X/k}(\E))
\end{eqnarray*}
There is a canonical stratification (\emph{Frobenius stratification}) on $\mathfrak{M}^{s}_X(r,d)$ by Harder-Narasimhan polygons \cite{JRXY06}. By a theorem of S. S. Shatz \cite[Theorem 3]{Shatz77}, the subset $$S_{\Pg}:=\{\E\in\mathfrak{M}^{s}_{X^{(1)}}(r,d)|\HNP(F^*_{X/k}(\E))\succcurlyeq\Pg\}$$
is a closed subvariety of $\mathfrak{M}^{s}_X(r,d)$ for any $\Pg\in\ConPg(r,pd)$.

The fundamental question of the Frobenius stratification of muduli space of (semi)-stable vector bundles in positive characteristic is: what is the geometric properties, such as non-emptiness, irreducibility, smoothness, dimension and so on, of each stratum of Frobenius stratification. However, very little is known about the strata of Frobenius stratification. Some results are only known in special cases or for small values of $p$, $g$, $r$ and $d$. For example, K. Joshi, S. Ramanan, E. Z. Xia and J.-K. Yu \cite{JRXY06} given a completely understand of moduli space $\mathfrak{M}^{s}_{X^{(1)}}(2,d)$ when $p=2$ and $g\geq 2$. They proved the irreducibility of each non-empty stratum of Frobenius stratification and obtained their respective dimensions. This is the only case understood completely.

In section 4, we show that a stable bundles $\E$ of rank $p$ and degree $d$ on $X^{(1)}$ whose Frobenius pull back $F^*_{X/k}(\E)$ have maximal Harder-Narasimhan Polygon if and only if $\E\cong{F_{X/k}}_*(\mathscr{L})$ for some line bundle $\mathscr{L}$ on $X$. It follows that the locus
$$W=\{\E\in\mathfrak{M}^{s}_{X^{(1)}}(p,d)~|~\HNP(F^*_{X/k}(\E))\succcurlyeq\HNP(F^*_{X/k}(\F))~\text{for any}~\F\in\mathfrak{M}^{s}_{X^{(1)}}(p,d)\}$$
of moduli space $\mathfrak{M}^{s}_{X^{(1)}}(p,d)$ is precisely the image of the morphism $$S^s_{\mathrm{Frob}}:\mathfrak{M}^{s}_X(1,d-(p-1)(g-1))\rightarrow\mathfrak{M}^{s}_{X^{(1)}}(p,d).$$
Hence, $W$ is a closed sub-variety of $\mathfrak{M}^{s}_{X^{(1)}}(p,d)$ which is isomorphic to Jacobian variety $\Jac_X$ of $X$ (Corollary \ref{MaxFrobStr}). This result generalize the partial result of \cite[Theorem 4.6.4]{JRXY06} via different method.

\section{Canonical Connection and Canonical Filtration}

Let $k$ be an algebraically closed field of characteristic $p>0$,
and $X$ a smooth projective variety over $k$.
Consider the commutative diagram
$$\xymatrix{
  X \ar@/_/[ddr]_{\pi} \ar@/^/[drr]^{F_X} \ar@{.>}[dr]|-{F_{X/k}}\\
   & X^{(1)} \ar[d]^{\pi^{(1)}} \ar[r] & X \ar[d]^{\pi}\\
   & \Spec(k) \ar[r]^{F_k}             & \Spec(k).}$$
For any coherent sheaf $\F\in\mathfrak{Qco}(X^{(1)})$, there exists a \emph{canonical connection} on the coherent sheaf
$F^*_{X/k}(\F)$, denote by $(F^*_{X/k}(\F),\nabla_{\mathrm{can}})$:
$$\nabla_{\mathrm{can}}:F^*_{X/k}(\F)\rightarrow F^*_{X/k}(\F)\otimes_{\Ox_X}\Omg^1_{X/k}$$ locally defined by $f\otimes m\mapsto m\otimes d(f)$, where $m\in\F$, $f\in\Ox_X$,
$d:\Ox_X\rightarrow\Omg^1_{X/k}$ is the canonical exterior differentiation.

\begin{Definition}\label{CanFil}
Let $k$ be an algebraically closed field of characteristic $p>0$, and $X$ a smooth projective variety over $k$. For
any coherent sheaf $\E$ on $X$, let
$$\nabla_{\mathrm{can}}:F^*_{X/k}{F_{X/k}}_*(\E)\rightarrow
F^*_{X/k}{F_{X/k}}_*(\E)\otimes_{\Ox_X}\Omg^1_{X/k}$$
be the canonical connection on $F^*_{X/k}{F_{X/k}}_*(\E)$.
Set
\begin{eqnarray*}
V_0&:=&F^*_{X/k}{F_{X/k}}_*(\E),\\
V_1&:=&\mathrm{ker}(F^*_{X/k}{F_{X/k}}_*(\E)\twoheadrightarrow\E)),\\
V_{l+1}&:=&\mathrm{ker}\{V_l\stackrel{\nabla}{\rightarrow}F^*_{X/k}{F_{X/k}}_*(\E)
\otimes_{\Ox_X}\Omg^1_{X/k}\rightarrow (F^*_{X/k}{F_{X/k}}_*(\E)/V_l)
\otimes_{\Ox_X}\Omg^1_{X/k}\}
\end{eqnarray*}
The filtration
$${\mathbb{F}^{\mathrm{can}}_\E}_\bullet:F^*_{X/k}{F_{X/k}}_*(\E)=V_0\supset V_1\supset V_2\supset\cdots$$
is called the \emph{canonical filtration} of
$F^*_{X/k}{F_{X/k}}_*(\E)$.
\end{Definition}

X. Sun proved the following theorem in \cite{Sun08}.
\begin{Theorem}\cite[Theorem 3.7]{Sun08} Let $k$ be an algebraically closed field of characteristic $p>0$, and
$X$ a smooth projective variety of dimension $n$ over $k$. Let
$\E$ be a vector bundle. Then the canonical filtration of
$F^*_{X/k}{F_{X/k}}_*(\E)$ is
$$0=V_{n(p-1)+1}\subset V_{n(p-1)}\subset\cdots\subset V_1\subset V_0=
F^*_{X/k}{F_{X/k}}_*(\E)$$ with
$\nabla^l:V_l/V_{l+1}\cong\E\otimes_{\Ox_X}\mathrm{T}^l(\Omg^1_{X/k}),~0\leq
l\leq n(p-1).$
\end{Theorem}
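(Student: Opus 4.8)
The plan is to deduce the whole theorem from an explicit computation in étale-local coordinates. Both assertions---that the canonical filtration terminates at step $n(p-1)$ and that $\nabla^l$ induces the stated isomorphism on the graded pieces---concern coherent subsheaves and $\Ox_X$-linear maps defined by the intrinsic recursion of Definition \ref{CanFil}, so it suffices to verify them after restriction to an étale neighbourhood. First I would fix local functions $t_1,\dots,t_n$ on $X$ with $dt_1,\dots,dt_n$ a frame of $\Omg^1_{X/k}$, together with a local frame $e_1,\dots,e_r$ of the vector bundle $\E$.

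The key local model is the identification
$$F^*_{X/k}{F_{X/k}}_*(\Ox_X)\cong\Ox_X[\theta_1,\dots,\theta_n]/(\theta_1^p,\dots,\theta_n^p),\qquad \theta_i:=1\otimes t_i-t_i\otimes 1,$$
which I would justify by the relation $\theta_i^p=(1\otimes t_i-t_i\otimes 1)^p=1\otimes t_i^p-t_i^p\otimes 1=0$ valid in characteristic $p$, together with the standard fact that ${F_{X/k}}_*(\Ox_X)$ is locally free over $\Ox_{X^{(1)}}$ with local basis the monomials $t^\alpha$, $0\le\alpha_i\le p-1$. Tensoring with $\E$ over $\Ox_X$, the sheaf $F^*_{X/k}{F_{X/k}}_*(\E)$ acquires the frame $\{\theta^\alpha\otimes e_j:0\le\alpha_i\le p-1\}$. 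In this frame the counit $F^*_{X/k}{F_{X/k}}_*(\E)\twoheadrightarrow\E$ is the augmentation $\theta^\alpha\otimes e_j\mapsto\delta_{\alpha,0}\,e_j$, while the defining rule $f\otimes m\mapsto m\otimes d(f)$ gives $\nabla_{\mathrm{can}}(\theta_i)=-dt_i$, hence
$$\nabla_{\mathrm{can}}(\theta^\alpha\otimes e)=-\sum_{i=1}^n\alpha_i\,\theta^{\alpha-\epsilon_i}\otimes e\otimes dt_i,$$
with $\epsilon_i$ the $i$-th standard vector. The decisive feature is that $\nabla_{\mathrm{can}}$ lowers the $\theta$-degree $|\alpha|=\sum_i\alpha_i$ by exactly one.

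Next I would show that the canonical filtration is the $\mathfrak m$-adic filtration for the augmentation ideal $\mathfrak m=(\theta_1,\dots,\theta_n)$, i.e. $V_l=\mathfrak m^l\cdot F^*_{X/k}{F_{X/k}}_*(\E)$, the span of $\{\theta^\alpha\otimes e_j:|\alpha|\ge l\}$, by induction on $l$. The base case $V_1=\ker(\text{counit})=\mathfrak m$ is immediate from the augmentation description. For the inductive step, because $\nabla_{\mathrm{can}}$ drops the $\theta$-degree by one, the composite $V_l\xrightarrow{\nabla_{\mathrm{can}}}(V_0/V_l)\otimes\Omg^1_{X/k}$ records only the lowest-degree component of a section, so its kernel is exactly the span of $\{|\alpha|\ge l+1\}$; this identifies $V_{l+1}=\mathfrak m^{l+1}$ and, since the top monomial degree is $n(p-1)$, yields $V_{n(p-1)+1}=0$.

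Finally I would identify the graded pieces. Passing to the associated graded converts $\nabla_{\mathrm{can}}$ into an $\Ox_X$-linear symbol $V_l/V_{l+1}\to(V_{l-1}/V_l)\otimes\Omg^1_{X/k}$; iterating produces an $\Ox_X$-linear map $\nabla^l:V_l/V_{l+1}\to\E\otimes_{\Ox_X}(\Omg^1_{X/k})^{\otimes l}$ landing in the subsheaf $\E\otimes\mathrm{T}^l(\Omg^1_{X/k})$, which has local frame $\{(dt)^\alpha:|\alpha|=l\}$. A direct calculation gives $\nabla^l(\theta^\alpha\otimes e)=(-1)^l\big(\prod_i\alpha_i!\big)\,e\otimes(dt)^\alpha$, and since each $\alpha_i!$ with $0\le\alpha_i\le p-1$ is a unit in $k$, the map carries the frame $\{\theta^\alpha\otimes e_j:|\alpha|=l\}$ to a frame of $\E\otimes\mathrm{T}^l(\Omg^1_{X/k})$, hence is an isomorphism. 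The step I expect to be the main obstacle is organising the interaction of $\nabla_{\mathrm{can}}$ with the filtration coordinate-freely: one must check that the iterated symbol $\nabla^l$ is genuinely $\Ox_X$-linear and well defined on $V_l/V_{l+1}$ even though $\nabla_{\mathrm{can}}$ is not $\Ox_X$-linear, and that the local $\mathfrak m$-adic description glues to the intrinsic recursion. The non-vanishing of $\prod_i\alpha_i!$ modulo $p$ is precisely what both pins down the range $0\le l\le n(p-1)$ and upgrades the injective symbol to an isomorphism.
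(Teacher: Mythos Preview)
The paper does not give its own proof of this statement: it is quoted verbatim as \cite[Theorem 3.7]{Sun08} and used as a black box, so there is nothing in the present paper to compare your argument against. Your sketch is, in outline, the same \'etale-local computation that Sun carries out in the cited reference---introducing the nilpotent generators $\theta_i=1\otimes t_i-t_i\otimes 1$, showing that $\nabla_{\mathrm{can}}$ drops $\theta$-degree by one, identifying $V_l$ with the $\mathfrak m$-adic filtration, and reading off the graded pieces---so you are not proposing a genuinely different route but rather reconstructing the original proof. If you want to turn this into a self-contained argument you should be a bit more careful about what $\mathrm{T}^l(\Omg^1_{X/k})$ means: in Sun's paper it is a specific $\mathrm{GL}_n$-subrepresentation of $V^{\otimes l}$ (not simply $\mathrm{Sym}^l$ in characteristic $p$), and the local frame you write as $\{(dt)^\alpha:|\alpha|=l,\ 0\le\alpha_i\le p-1\}$ has to be matched with that definition, which is exactly where the constraint $\alpha_i\le p-1$ and the unit $\prod_i\alpha_i!$ enter.
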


Let $\E$ be a vector bundle of rank $n$ on a variety, then
$\mathrm{T}^l(\E)\subset\E^{\otimes l}$ is defined to be the associated vector bundle
of the frame bundle of $\E$ (principal $\mathrm{GL}_n(k)$-bundle) through the representation $\mathrm{T}^l(V)$ (See \cite[Definition 3.4]{Sun08}).

\section{The Morphism Induced by Frobenius Push-Forwards}

In this section, we will study the natural morphism between moduli spaces of (semi)-stable bundles on curves induced
by Frobenius push-forwards.

\begin{Proposition}\label{Prop:SunMorp}
Let $k$ be an algebraically closed field of characteristic $p>0$, and $X$ a smooth projective curve of genus $g\geq 1$ over
$k$. Then
\begin{itemize}
\item[$(1)$.] If $g\geq 1$, then the set-theoretic map
\begin{eqnarray*}
S^{ss}_{\mathrm{Frob}}:\mathfrak{M}^{ss}_X(r,d)&\rightarrow&\mathfrak{M}^{ss}_{X^{(1)}}(r\cdot p,d+r(p-1)(g-1))\\
\/[\E\/]&\mapsto&\/[{F_{X/k}}_*(\E)\/]
\end{eqnarray*}
is a proper morphism.
\item[$(2)$.] If $g\geq 2$, then the morphism $S^{ss}_{\mathrm{Frob}}$ restrict to sub-variety
$\mathfrak{M}^s_X(r,d)$ induces a proper morphism
$S^s_{\mathrm{Frob}}:\mathfrak{M}^s_X(r,d)\rightarrow\mathfrak{M}^s_{X^{(1)}}(r\cdot p,d+r(p-1)(g-1))$.
\end{itemize}
\end{Proposition}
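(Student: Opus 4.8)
The plan is to upgrade the set-theoretic assignment $[\E]\mapsto[{F_{X/k}}_*(\E)]$ to a genuine morphism of schemes by realizing it functorially in families and then descending it through the Geometric Invariant Theory construction of the moduli spaces; once it is known to be a morphism, properness will follow almost formally from the projectivity of the source. I would fix the standard presentation of $\mathfrak{M}^{ss}_X(r,d)$ as a good quotient $q:R^{ss}\to\mathfrak{M}^{ss}_X(r,d)$ of a $\mathrm{PGL}_N$-invariant open subscheme $R^{ss}$ of a suitable Quot scheme, carrying a universal family $\mathscr{E}$ of semistable bundles on $X\times_k R^{ss}$, flat over $R^{ss}$.

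The technical heart is that relative Frobenius push-forward is well behaved in families. The morphism $F_{X/k}\times\mathrm{id}:X\times_k R^{ss}\to X^{(1)}\times_k R^{ss}$ is finite and flat, hence affine with exact push-forward, so $(F_{X/k}\times\mathrm{id})_*\mathscr{E}$ remains flat over $R^{ss}$; and because finite push-forward commutes with arbitrary base change, the restriction of $(F_{X/k}\times\mathrm{id})_*\mathscr{E}$ to the fiber over a closed point $[\E]$ is precisely ${F_{X/k}}_*(\E)$. Since each such fiber is a vector bundle of constant rank $r\cdot p$ on $X^{(1)}$, the flat coherent sheaf $(F_{X/k}\times\mathrm{id})_*\mathscr{E}$ is locally free, i.e. a bona fide family of vector bundles. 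By X. Sun's theorem (point (iii) of the introduction) every fiber is semistable, so this family defines a morphism $R^{ss}\to\mathfrak{M}^{ss}_{X^{(1)}}(r\cdot p,d+r(p-1)(g-1))$. As isomorphic bundles are sent to isomorphic bundles, this morphism is constant on $\mathrm{PGL}_N$-orbits, and the universal property of the good quotient $q$ produces the desired morphism $S^{ss}_{\mathrm{Frob}}$ refining the given set-theoretic map. Properness is then immediate: $\mathfrak{M}^{ss}_X(r,d)$ is a projective variety, hence proper over $k$, while the target is separated over $k$, and any morphism from a proper $k$-scheme to a separated $k$-scheme is proper. This settles $(1)$.

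For $(2)$, with $g\geq 2$, Sun's theorem also yields that ${F_{X/k}}_*$ carries stable bundles to stable bundles, so $S^{ss}_{\mathrm{Frob}}$ restricts to a map $\mathfrak{M}^s_X(r,d)\to\mathfrak{M}^s_{X^{(1)}}(r\cdot p,d+r(p-1)(g-1))$. The crucial point is that this stable locus is exactly the preimage $(S^{ss}_{\mathrm{Frob}})^{-1}\big(\mathfrak{M}^s_{X^{(1)}}(r\cdot p,d+r(p-1)(g-1))\big)$: if $\E$ is semistable but not stable it admits a proper subbundle $\E'$ with $\mu(\E')=\mu(\E)$, whence ${F_{X/k}}_*(\E')\subset{F_{X/k}}_*(\E)$ is a proper subsheaf, and the rank and degree formulas give $\mu({F_{X/k}}_*(\E'))=\mu(\E')/p+(p-1)(g-1)/p=\mu({F_{X/k}}_*(\E))$, so ${F_{X/k}}_*(\E)$ fails to be stable. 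Consequently $S^s_{\mathrm{Frob}}$ is obtained from the proper morphism $S^{ss}_{\mathrm{Frob}}$ by base change along the open immersion $\mathfrak{M}^s_{X^{(1)}}(r\cdot p,d+r(p-1)(g-1))\hookrightarrow\mathfrak{M}^{ss}_{X^{(1)}}(r\cdot p,d+r(p-1)(g-1))$, and is therefore proper.

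The main obstacle is entirely concentrated in promoting the set-theoretic map to a morphism, namely in verifying the family-level good behaviour of push-forward (flatness over the base, compatibility with base change, and local freeness of the resulting sheaf) and in legitimately descending the induced morphism on $R^{ss}$ through the good quotient $q$. Once this is in place, both properness statements are formal consequences, the second using the preimage identification above.
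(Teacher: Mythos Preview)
Your argument is correct and follows essentially the same route as the paper's: push Frobenius forward in families, invoke Sun's theorem for fibrewise semistability, descend to the moduli space, and conclude properness from projectivity; for $(2)$ both you and the paper identify $\mathfrak{M}^s_X(r,d)$ as the exact preimage of the stable locus and deduce properness by base change. The only cosmetic difference is that the paper phrases the descent via the universal property of the coarse moduli space $\mathfrak{M}^{ss}_X(r,d)$ applied to an arbitrary test family, whereas you work with the specific GIT presentation $R^{ss}\to\mathfrak{M}^{ss}_X(r,d)$ and descend through the good quotient; these are two standard formulations of the same step.
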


\begin{proof} Let $T$ be an algebraic variety over $k$, and $\mathcal{E}\in\mathfrak{Coh}(T\times X)$
a flat family of semi-stable bundles on $X$ of rank $r$ and degree
$d$ parameterized by $T$. Consider the morphism $1_T\times
F_{X/k}:T\times X\rightarrow T\times X^{(1)}$. Then $(1_T\times
F_{X/k})_*(\mathcal{E})$ is flat over $T$, and $((1_T\times
F_{X/k})_*(\mathcal{E}))_t\cong {F_{X/k}}_*(\mathcal{E}_t)$ for any
$t\in T$. If $g\geq 1$, then ${F_{X/k}}_*(\mathcal{E}_t)$ are
semi-stable vector bundles of rank $r\cdot p$ and degree
$d+r(p-1)(g-1)$ by \cite[Theorem 2.2, Lemma 4.2]{Sun08}. Thus,
$(1_T\times F_{X/k})_*(\mathcal{E})$ is a flat family of semi-stable
bundles of rank $r\cdot p$ and degree $d+r(p-1)(g-1)$ parameterized
by $T$. Hence, by the universal property of
$\mathfrak{M}^{ss}_{X^{(1)}}(r\cdot p,d+r(p-1)(g-1))$, the set-theoretic map
\begin{eqnarray*}
V_{\mathcal{E}}:T&\rightarrow&\mathfrak{M}^{ss}_{X^{(1)}}(r\cdot p,d+r(p-1)(g-1))\\
t&\mapsto&\/[{F_{X/k}}_*(\mathcal{E}_t)\/]
\end{eqnarray*}
is a morphism.

By the arbitrariness of $T$ and $\mathcal{E}$ and the universal property of moduli space $\mathfrak{M}^{ss}_X(r,d)$,
the set-theoretic map
\begin{eqnarray*}
S^{ss}_{\mathrm{Frob}}:\mathfrak{M}^{ss}_X(r,d)&\rightarrow&\mathfrak{M}^{ss}_{X^{(1)}}(r\cdot p,d+r(p-1)(g-1))\\
\/[\E\/]&\mapsto&\/[{F_{X/k}}_*(\E)\/]
\end{eqnarray*}
is a morphism. Since $\mathfrak{M}^{ss}_X(r,d)$ and $\mathfrak{M}^{ss}_{X^{(1)}}(r\cdot p,d+r(p-1)(g-1))$ are
projective varieties, it follows that $S^{ss}_{\mathrm{Frob}}$ is a proper morphism.

$(2)$. If $g\geq 2$, then by \cite[Theorem 2.2]{Sun08} we have
$$S_{\mathrm{Frob}}(\mathfrak{M}^{s}_X(r,d))\subseteq\mathfrak{M}^{s}_{X^{(1)}}(r\cdot p,d+r(p-1)(g-1)).$$
On the other hand, we claim that for any vector bundle $\E\in\mathfrak{Coh}(X)$, the
(semi)-stability of ${F_{X/k}}_*(\E)$ implies the (semi)-stability of $\E$. In fact,
let $\F\subset\E$ be a coherent sub-sheaf with $0<\mathrm{rk}(\F)<\mathrm{rk}(\E)$,
then ${F_{X/k}}_*(\F)\subset{F_{X/k}}_*(\E)$ and
$0<\mathrm{rk}({F_{X/k}}_*(\F))<\mathrm{rk}({F_{X/k}}_*(\E))$,
since ${F_{X/k}}_*$ is a left exact functor. As (cf Lemma 4.2 in \cite{Sun08})
$$\mu({F_{X/k}}_*(\E))=\frac{1}{p}\mu(F^*_{X/k}{F_{X/k}}_*(\E))
=\frac{(p-1)(2\cdot g-2)}{2p}+\frac{\mu(\E)}{p}.$$ Thus
$\mu({F_X}_*(\F))<(\mathrm{resp.}\,\leq)\mu({F_X}_*(\E))$ implies
$\mu(\F)<(\mathrm{resp.}\,\leq)\mu(\E)$.
Hence, $${S^{ss^{-1}}_{\mathrm{Frob}}}(\mathfrak{M}^{s}_{X^{(1)}}(r\cdot p,d+r(p-1)(g-1)))=\mathfrak{M}^{s}_X(r,d).$$
Thus the properness of morphism
$S^s_{\mathrm{Frob}}:\mathfrak{M}^s_X(r,d)\rightarrow\mathfrak{M}^s_{X^{(1)}}(r\cdot p,d+r(p-1)(g-1))$
follows from the properness of $S^{ss}_{\mathrm{Frob}}$.
\end{proof}

The following lemma assert that push-forwards preserves the determinant of vector bundles on curves.

\begin{Lemma}\label{Lem:FrobDet}
Let $k$ be an algebraically closed field, $f:X\rightarrow Y$ a finite morphism of smooth projective curves over $k$.
Let $\E$ be a vector bundle, $\mathrm{det}(\E)=\Ox_X(\sum n_iP_i)$, where $P_i\in X$,
$n_i\in\mathbb{Z}$. Then
$$\mathrm{det}(f_*(\E))\cong
(\mathrm{det}(f_*\Ox_X))^{\otimes\mathrm{rk}(\E)}
\otimes_{\Ox_Y}\Ox_Y(\sum n_if(P_i)).$$
In particulary, let $\E_1$ and $\E_2$ be vector bundles such that
$\mathrm{rk}(\E_1)=\mathrm{rk}(\E_2)$ and $\mathrm{det}(\E_1)=\mathrm{det}(\E_2)$.
Then $\mathrm{det}(f_*(\E_1))=\mathrm{det}(f_*(\E_2))$.
\end{Lemma}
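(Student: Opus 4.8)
The plan is to reduce the statement first to the case of a line bundle and then to a single point, exploiting that $f$ is flat. I would begin by recording two structural inputs. Since $X$ and $Y$ are smooth curves and $f$ is finite, miracle flatness makes $f$ finite locally free; hence $f_{*}$ is exact and carries vector bundles to vector bundles. I would also use the determinant as a homomorphism from the Grothendieck group of coherent sheaves on $Y$ to $\Pic(Y)$: it is multiplicative on short exact sequences, and for a finite-length torsion sheaf $T$ one has $\det(T)\cong\Ox_Y(\sum_Q\mathrm{length}_Q(T)\cdot Q)$. In particular $\det(f_{*}k_P)\cong\Ox_Y(f(P))$ for the length-one skyscraper $k_P$ at a closed point $P$, since $k$ is algebraically closed and residue fields at closed points are $k$.

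Next I would carry out the dévissage to line bundles. Every vector bundle $\E$ of rank $r$ on the curve $X$ admits a filtration by subbundles $0=\E_0\subset\E_1\subset\cdots\subset\E_r=\E$ whose successive quotients $L_i:=\E_i/\E_{i-1}$ are line bundles: choose a sub-line-bundle, saturate it so the quotient is again locally free, and induct on the rank. Applying the exact functor $f_{*}$ to the sequences $0\to\E_{i-1}\to\E_i\to L_i\to 0$ yields a filtration of $f_{*}\E$ by subbundles with quotients $f_{*}L_i$, so multiplicativity of $\det$ gives $\det(f_{*}\E)\cong\bigotimes_{i=1}^{r}\det(f_{*}L_i)$. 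Since also $\det\E\cong\bigotimes_{i=1}^{r}L_i$, it suffices to settle the line bundle case.

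For a line bundle I would write it as $\Ox_X(D)$ and prove $\det(f_{*}\Ox_X(D))\cong\det(f_{*}\Ox_X)\otimes\Ox_Y(f_{*}D)$, where $f_{*}(\sum a_kQ_k):=\sum a_kf(Q_k)$, by induction on the number of points of $D$. Indeed, pushing the sequence $0\to\Ox_X(D)\to\Ox_X(D+P)\to k_P\to 0$ forward and taking determinants gives $\det(f_{*}\Ox_X(D+P))\cong\det(f_{*}\Ox_X(D))\otimes\Ox_Y(f(P))$, and the analogous sequence handles the removal of a point; starting from $D=0$ this proves the claim for every $D$. Writing each $L_i=\Ox_X(D_i)$, the line bundle formula then yields $\det(f_{*}\E)\cong\det(f_{*}\Ox_X)^{\otimes r}\otimes\Ox_Y(f_{*}(\sum_iD_i))$, and since $\Ox_X(\sum_iD_i)\cong\det\E\cong\Ox_X(\sum_j n_jP_j)$ the divisors $\sum_iD_i$ and $\sum_j n_jP_j$ are linearly equivalent. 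The key point is that the left-hand side of the line bundle formula depends only on the isomorphism class of $\Ox_X(D)$, which forces $\Ox_Y(f_{*}D)$ to depend only on that class; hence $\Ox_Y(f_{*}(\sum_iD_i))\cong\Ox_Y(\sum_j n_jf(P_j))$, giving the stated formula. The final assertion is then immediate, the right-hand side depending only on $\rank\E$ and on $\det\E$.

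The only genuine content beyond this bookkeeping is the multiplicativity of $\det$ over short exact sequences of coherent sheaves involving the torsion quotients $k_P$, together with the evaluation $\det(f_{*}k_P)\cong\Ox_Y(f(P))$; this is where exactness of $f_{*}$ (via flatness of $f$) and the regularity of $Y$ (so that $\det$ extends to all coherent sheaves) are essential. I expect the subtlest point to be trading the divisor $\sum_iD_i$ produced by the flag for the given representative $\sum_j n_jP_j$ of $\det\E$; but as noted, this compatibility with linear equivalence drops out of the line bundle formula itself, so no separate appeal to the norm map on function fields is needed.
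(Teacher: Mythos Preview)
Your proposal is correct and follows essentially the same architecture as the paper: induct on the rank by peeling off a sub-line-bundle, use exactness of $f_*$ to split determinants, and reduce to the rank-one case. The only substantive difference is in that base case. The paper simply cites \cite[Exercise IV.2.6]{Hartshorne77} for the formula $\det(f_*\Ox_X(D))\cong\det(f_*\Ox_X)\otimes\Ox_Y(f_*D)$, whereas you prove it directly by a further induction on the support of $D$, pushing forward the skyscraper sequences $0\to\Ox_X(D)\to\Ox_X(D+P)\to k_P\to 0$ and using $\det(f_*k_P)\cong\Ox_Y(f(P))$. Your route is more self-contained and has the mild advantage that it makes explicit why the right-hand side depends only on the linear equivalence class of the divisor representing $\det\E$ --- a point the paper leaves implicit. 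Conversely, the paper's version is shorter because it outsources that computation.
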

\begin{proof} We use induction on the rank of $\E$. Suppose $\mathrm{rk}(\E)=r$.
In the case $r=1$, by \cite[Excise IV.2.6]{Hartshorne77} we have
$$\mathrm{det}(f_*(\E))\cong
(\mathrm{det}(f_*\Ox_X))\otimes_{\Ox_Y}\Ox_Y(\sum n_if(P_i)).$$
The lemma is true. Suppose the lemma is true for any vector bundles of rank less than $r$.
Choose any sub-line bundle $\mathscr{L}\subset\E$ such that $\E/\mathscr{L}$
is also a vector bundle. Consider the exact sequence of $\Ox_Y$-modules
$$0\rightarrow f_*(\mathscr{L})\rightarrow
f_*(\E)\rightarrow f_*(\E/\mathscr{L})\rightarrow 0,$$
then we have $\mathrm{det}(f_*(\E))\cong\mathrm{det}(f_*(\mathscr{L}))\otimes_{\Ox_Y}
\mathrm{det}(f_*(\E/\mathscr{L}))$.
On the other hand, since $\mathrm{det}(\E)\cong
\mathrm{det}(\mathscr{L})\otimes_{\Ox_Y}\mathrm{det}(\E/\mathscr{L})$,
by induction hypothesis we have
$$\mathrm{det}(f_*(\E))\cong
(\mathrm{det}(f_*\Ox_X))^{\otimes\mathrm{rk}(\E)}
\otimes_{\Ox_Y}\Ox_Y(\sum n_if(P_i)),$$
where $\mathrm{det}(\E)=\Ox_X(\sum n_iP_i)$, $P_i\in X$, $n_i\in\mathbb{Z}$.
\end{proof}

\begin{Corollary} Let $k$ be an algebraically closed field of characteristic $p>0$, and $X$
a smooth projective curve of genus $g\geq 1$ over $k$, $\mathscr{L}\in\mathrm{Pic}(X)$.
Then the set-theoretic map
\begin{eqnarray*}
S^{\mathscr{L}}_{\mathrm{Frob}}:\mathfrak{M}^{ss}_X(r,\mathscr{L})&\rightarrow&
\mathfrak{M}^{ss}_{X^{(1)}}(r\cdot p,\mathrm{det}({F_{X/k}}_*(\E))),
(\forall\E\in\mathfrak{M}^{ss}_X(r,\mathscr{L}))\\
\/[\E\/]&\mapsto&\/[{F_{X/k}}_*(\E)\/]
\end{eqnarray*}
is a proper morphism.
\end{Corollary}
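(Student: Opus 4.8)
The plan is to realize $S^{\mathscr L}_{\mathrm{Frob}}$ as the restriction of the proper morphism $S^{ss}_{\mathrm{Frob}}$ of Proposition \ref{Prop:SunMorp}$(1)$ to the fixed-determinant locus, the only genuinely new input being Lemma \ref{Lem:FrobDet}. Writing $d=\degree(\mathscr L)$, I would first record that $\mathfrak{M}^{ss}_X(r,\mathscr L)$ is the fiber over $[\mathscr L]$ of the determinant morphism $\det\colon\mathfrak{M}^{ss}_X(r,d)\to\Pic^d(X)$, hence a closed subvariety of $\mathfrak{M}^{ss}_X(r,d)$; likewise the prospective target $\mathfrak{M}^{ss}_{X^{(1)}}(r\cdot p,\det({F_{X/k}}_*(\E)))$ is a fiber of the determinant morphism on $X^{(1)}$, and so is a closed subvariety of $\mathfrak{M}^{ss}_{X^{(1)}}(r\cdot p,d+r(p-1)(g-1))$.

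The decisive step is to check that $S^{ss}_{\mathrm{Frob}}$ carries $\mathfrak{M}^{ss}_X(r,\mathscr L)$ into this fixed-determinant locus, that is, that $\det({F_{X/k}}_*(\E))$ is the \emph{same} line bundle for every semistable $\E$ with $\det(\E)\cong\mathscr L$. This is exactly the ``in particular'' clause of Lemma \ref{Lem:FrobDet} applied to the finite morphism $f=F_{X/k}$: writing $\det(\E)=\Ox_X(\sum n_iP_i)$, the formula $\det({F_{X/k}}_*(\E))\cong(\det{F_{X/k}}_*\Ox_X)^{\otimes r}\otimes_{\Ox_{X^{(1)}}}\Ox_{X^{(1)}}(\sum n_iF_{X/k}(P_i))$ depends only on $r$ and on the divisor class of $\mathscr L$. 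Thus the target line bundle is well-defined and independent of the chosen representative $\E$, and $S^{ss}_{\mathrm{Frob}}$ indeed lands in the claimed component.

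Granting this, the conclusion is formal. The composite $\mathfrak{M}^{ss}_X(r,\mathscr L)\hookrightarrow\mathfrak{M}^{ss}_X(r,d)\xrightarrow{S^{ss}_{\mathrm{Frob}}}\mathfrak{M}^{ss}_{X^{(1)}}(r\cdot p,d+r(p-1)(g-1))$ is a morphism (a closed immersion followed by the morphism of Proposition \ref{Prop:SunMorp}), and by the previous paragraph it factors, set-theoretically and hence scheme-theoretically on reduced varieties, through the closed subvariety $\mathfrak{M}^{ss}_{X^{(1)}}(r\cdot p,\det({F_{X/k}}_*(\E)))$. Since a closed immersion is separated, the cancellation property for proper morphisms shows the induced map $S^{\mathscr L}_{\mathrm{Frob}}$ to this subvariety is again proper; equivalently, both source and corestricted target are projective over $k$, so any morphism between them is automatically proper.

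I do not expect a serious obstacle: the content is entirely in Lemma \ref{Lem:FrobDet}, which pins down the determinant of the push-forward as constant on each fixed-determinant family, while properness is inherited from Proposition \ref{Prop:SunMorp}. The one point I would state carefully is that $S^{\mathscr L}_{\mathrm{Frob}}$ is a genuine morphism of varieties and not merely a set-theoretic map; this follows by rerunning the family argument of Proposition \ref{Prop:SunMorp} with flat families $\mathcal E$ over $T\times X$ whose determinant along $X$ is constantly $\mathscr L$, using the universal property of the fixed-determinant moduli space.
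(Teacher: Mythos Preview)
Your proposal is correct and follows essentially the same route as the paper: both argue that the fixed-determinant moduli spaces are closed subvarieties of $\mathfrak{M}^{ss}_X(r,d)$ and $\mathfrak{M}^{ss}_{X^{(1)}}(r\cdot p,d+r(p-1)(g-1))$, invoke Lemma~\ref{Lem:FrobDet} to see that $S^{ss}_{\mathrm{Frob}}$ respects the determinant and hence restricts, and then inherit properness from Proposition~\ref{Prop:SunMorp}. Your write-up is simply more explicit about the scheme-theoretic factorization and the properness deduction than the paper's terse version.
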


\begin{proof} For any $[\E]\in\mathfrak{M}^{ss}_X(r,\mathscr{L})$. Since $\mathfrak{M}^{ss}_X(r,\mathscr{L})$
and $\mathfrak{M}^{ss}_{X^{(1)}}(r\cdot p,\mathrm{det}({F_{X/k}}_*(\E)))$ are closed sub-varieties of
$\mathfrak{M}^{ss}_{X^{(1)}}(r,d)$ and $\mathfrak{M}^{ss}_{X^{(1)}}(r\cdot p,d+r(p-1)(g-1))$ respectively.
Then the corollary follows from Proposition \ref{Prop:SunMorp} and Lemma \ref{Lem:FrobDet}.
\end{proof}

We now use the functoriality of canonical filtration and the uniqueness of Harder-Narasimhan filtration to prove
the following theorem.

\begin{Theorem}\label{Thm:Tangmap} Let $k$ be an algebraically closed field of characteristic $p>0$, and $X$
a smooth projective variety with a fixed ample divisor $H$. Let
$\E_1$, $\E_2$ be slope semi-stable vector bundles
such that
$\E_i\otimes_{\Ox_X}\mathrm{T}^l(\Omg^1_{X/k})$ are
slope semi-stable for any integer $0\leq l\leq n(p-1)$, $(i=1,2)$.
If $\mu(\Omg^1_{X/k})>0$. Then
\begin{itemize}
    \item[$(1)$.] The canonical filtration of $F^*_{X/k}{F_{X/k}}_*(\E_i)$
    is precisely the Harder-Narasimhan filtration of
    $F^*_{X/k}{F_{X/k}}_*(\E_i)$, $(i=1,2)$. In particular,
    ${F_{X/k}}_*(\E_1)\cong{F_{X/k}}_*(\E_2)$ implies $\E_1\cong\E_2$.
    \item[$(2)$.] If $\mu(\E_1)=\mu(\E_2)$, Then the natural $k$-linear homomorphism
    \begin{eqnarray*}
    \Phi:\mathrm{Ext}^1_X(\E_1,\E_2)&\rightarrow&\mathrm{Ext}^1_{X^{(1)}}({F_{X/k}}_*(\E_1),{F_{X/k}}_*(\E_2))\\
    \/[0\rightarrow\E_1\rightarrow\F\rightarrow\E_2\rightarrow 0\/]&\mapsto&\/[0\rightarrow{F_{X/k}}_*(\E_1)\rightarrow{F_{X/k}}_*(\F)
        \rightarrow{F_{X/k}}_*(\E_2)\rightarrow 0\/]
    \end{eqnarray*}
    is an injective homomorphism.
\end{itemize}
\end{Theorem}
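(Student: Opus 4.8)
The plan is to deduce (1) from X. Sun's description of the canonical filtration together with the uniqueness of the Harder--Narasimhan filtration. By Sun's theorem \cite{Sun08} the graded pieces of the canonical filtration of $F^*_{X/k}{F_{X/k}}_*(\E_i)$ are
$$V_l/V_{l+1}\cong\E_i\otimes_{\Ox_X}\mathrm{T}^l(\Omg^1_{X/k}),\qquad 0\leq l\leq n(p-1).$$
Since $\mathrm{T}^l(\Omg^1_{X/k})$ is associated to a degree-$l$ homogeneous subrepresentation of $V^{\otimes l}$ (with $V$ the standard representation), one has $\mu(\mathrm{T}^l(\Omg^1_{X/k}))=l\,\mu(\Omg^1_{X/k})$, whence
$$\mu(V_l/V_{l+1})=\mu(\E_i)+l\,\mu(\Omg^1_{X/k}).$$
Because $\mu(\Omg^1_{X/k})>0$, these slopes are strictly increasing in $l$. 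Viewing the canonical filtration as the increasing filtration $0=V_{n(p-1)+1}\subset V_{n(p-1)}\subset\cdots\subset V_0$, its graded pieces are then semistable (by hypothesis) with strictly decreasing slopes, so by the uniqueness of the Harder--Narasimhan filtration it coincides with it. For the last assertion, an isomorphism ${F_{X/k}}_*(\E_1)\cong{F_{X/k}}_*(\E_2)$ gives, upon applying $F^*_{X/k}$, an isomorphism $F^*_{X/k}{F_{X/k}}_*(\E_1)\cong F^*_{X/k}{F_{X/k}}_*(\E_2)$; being canonical, the Harder--Narasimhan (hence canonical) filtrations correspond under it, so it induces an isomorphism on the graded pieces $V_0/V_1$. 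As $V_0/V_1\cong\E_i$ by construction, $\E_1\cong\E_2$ follows.

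For (2), the map $\Phi$ is $k$-linear and well defined because ${F_{X/k}}_*$ is exact ($F_{X/k}$ being finite). To prove injectivity I would pass through an adjunction identification. As $F_{X/k}$ is finite and flat, both ${F_{X/k}}_*$ and $F^*_{X/k}$ are exact, so the adjunction $F^*_{X/k}\dashv{F_{X/k}}_*$ extends to the derived categories and yields, for every $i$, a natural isomorphism
$$\Ext^i_{X^{(1)}}({F_{X/k}}_*(\E_1),{F_{X/k}}_*(\E_2))\cong\Ext^i_X(F^*_{X/k}{F_{X/k}}_*(\E_1),\E_2).$$
A standard computation with the counit $\epsilon:F^*_{X/k}{F_{X/k}}_*(\E_1)\to\E_1$ shows that under this isomorphism $\Phi$ is identified with the pullback $\epsilon^*$. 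It therefore suffices to prove that
$$\epsilon^*:\Ext^1_X(\E_1,\E_2)\rightarrow\Ext^1_X(F^*_{X/k}{F_{X/k}}_*(\E_1),\E_2)$$
is injective. For this I would use the short exact sequence $0\to V_1\to F^*_{X/k}{F_{X/k}}_*(\E_1)\xrightarrow{\epsilon}\E_1\to 0$, where $V_1=V_1(\E_1)$ is the first step of the canonical filtration. Applying $\Hom_X(-,\E_2)$ produces the exact sequence
$$\Hom_X(V_1,\E_2)\xrightarrow{\ \delta\ }\Ext^1_X(\E_1,\E_2)\xrightarrow{\ \epsilon^*\ }\Ext^1_X(F^*_{X/k}{F_{X/k}}_*(\E_1),\E_2),$$
so $\epsilon^*$ is injective as soon as $\Hom_X(V_1,\E_2)=0$. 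By part (1) the canonical filtration of $F^*_{X/k}{F_{X/k}}_*(\E_1)$ is its Harder--Narasimhan filtration, and every graded piece of $V_1$ has slope $\mu(\E_1)+l\,\mu(\Omg^1_{X/k})$ with $l\geq 1$, hence slope $>\mu(\E_1)=\mu(\E_2)$. Since $\E_2$ is semistable of slope $\mu(\E_2)$, every homomorphism from a sheaf all of whose Harder--Narasimhan slopes exceed $\mu(\E_2)$ into $\E_2$ vanishes; thus $\Hom_X(V_1,\E_2)=0$, and $\Phi$ is injective.

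The step I expect to demand the most care is the adjunction identification in (2): one has to verify that the derived adjunction isomorphism is compatible with the extension-class description of $\Phi$ and genuinely turns $\Phi$ into $\epsilon^*$. To sidestep the derived category entirely, the same reduction can be carried out by hand---apply $F^*_{X/k}$ to a split push-forward extension and use the naturality of $\epsilon$ to obtain a morphism of short exact sequences relating $\epsilon^*$ of the original class to the push-forward class, then invoke the identical vanishing $\Hom_X(V_1,\E_2)=0$. Beyond this, the argument rests only on Sun's theorem, the positivity $\mu(\Omg^1_{X/k})>0$, and the vanishing of homomorphisms from higher-slope semistable pieces into a semistable bundle of lower slope.
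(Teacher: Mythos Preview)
Your argument for part (1) is essentially identical to the paper's: both invoke Sun's description of the graded pieces of the canonical filtration, compute the slopes $\mu(\E_i)+l\,\mu(\Omg^1_{X/k})$, observe they are strictly increasing because $\mu(\Omg^1_{X/k})>0$, and conclude by uniqueness of the Harder--Narasimhan filtration.

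For part (2) your approach is correct but genuinely different from the paper's. The paper argues directly at the level of extensions: given two classes $e_1,e_2$ with $\Phi(e_1)=\Phi(e_2)$, it pulls back the resulting isomorphism $\phi:{F_{X/k}}_*(\F_1)\cong{F_{X/k}}_*(\F_2)$ by $F^*_{X/k}$, then shows (using $\mu(\E_1)=\mu(\E_2)$) that each $\F_i\otimes\mathrm{T}^l(\Omg^1_{X/k})$ is again semistable, so the canonical filtration of $F^*_{X/k}{F_{X/k}}_*(\F_i)$ is its Harder--Narasimhan filtration; the isomorphism $F^*_{X/k}(\phi)$ then respects these filtrations and induces an isomorphism $\psi:\F_1\cong\F_2$ on the top quotient $V_0/V_1$, compatible with the extension data. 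Your route instead identifies $\Phi$ with $\epsilon^*$ via the adjunction $F^*_{X/k}\dashv{F_{X/k}}_*$ (both functors being exact), and reduces injectivity to the vanishing $\Hom_X(V_1,\E_2)=0$, which follows immediately from part (1) since every Harder--Narasimhan slope of $V_1$ exceeds $\mu(\E_1)=\mu(\E_2)$. Your approach is cleaner and more conceptual, avoiding the auxiliary verification that the canonical filtration of the middle terms $\F_i$ is again Harder--Narasimhan; the paper's approach is more hands-on and produces the splitting isomorphism explicitly, without any appeal to derived adjunction. The compatibility you flag---that under the adjunction isomorphism the push-forward map $\Phi$ becomes $\epsilon^*$---is indeed the only point requiring care, and it follows from naturality of the counit (for $f:\E_1\to A$ one has $\epsilon_A\circ F^*_{X/k}{F_{X/k}}_*(f)=f\circ\epsilon_{\E_1}$), extended to $\Ext^1$ by the $\delta$-functor argument you indicate.
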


\begin{proof} $(1)$. Consider the canonical filtration of $F^*_{X/k}{F_{X/k}}_*(\E_i)$:
$${\mathbb{F}^{\mathrm{can}}_{\E_i}}_\bullet:0=V^{\E_i}_{n(p-1)+1}\subset V^{\E_i}_{n(p-1)}
\subset\cdots V^{\E_i}_{l+1}\subset
V^{\E_i}_l\cdots\subset V^{\E_i}_1 \subset
V^{\E_i}_0=F^*_{X/k}{F_{X/k}}_*(\E_i).$$ Since
$V^{\E_i}_l/V^{\E_i}_{l+1}\cong
\E_i\otimes_{\Ox_X}\mathrm{T}^l(\Omg^1_{X/k})(0\leq
l\leq n(p-1))$ are slope semi-stable vector bundles, and for any
integer $0\leq s<t\leq n(p-1)$,
\begin{eqnarray*}
\mu(\E_i\otimes_{\Ox_X}\mathrm{T}^s(\Omg^1_{X/k}))&=&\mu(\E_i)+s\cdot\mu(\Omg^1_{X/k})\\
&<&\mu(\E_i)+t\cdot\mu(\Omg^1_{X/k})\\
&=&\mu(\E_i\otimes_{\Ox_X}\mathrm{T}^t(\Omg^1_{X/k})).
\end{eqnarray*}
Then by the uniqueness of Harder-Narasimhan filtration we know that
canonical filtration
${\mathbb{F}^{\mathrm{can}}_{\E_i}}_\bullet$ is precisely
the Harder-Narasimhan filtration of
$F^*_{X/k}{F_{X/k}}_*(\E_i)$. It is easy to see that
${F_{X/k}}_*(\E_1)\cong{F_{X/k}}_*(\E_2)$ implies
$\E_1\cong\E_2$ by the uniqueness of
Harder-Narasimhan filtration.

$(2)$. Let $e_i:=[0\rightarrow\E_1\rightarrow
\F_i\rightarrow\E_2\rightarrow
0]\in\mathrm{Ext}^1_X(\E_1,\E_2)$, $i=1,2$, such
that $\Phi(e_1)=\Phi(e_2)$, i.e. there exists isomorphism of
$\Ox_{X^{(1)}}$-modules
$\phi:{F_{X/k}}_*(\F_1)\stackrel{\cong}{\rightarrow}{F_{X/k}}_*(\F_2)$
such that the following diagram
$$\xymatrix{
  0 \ar[r] & {F_{X/k}}_*(\E_1) \ar[r]\ar@{=}[d] & {F_{X/k}}_*(\F_1) \ar[r]\ar[d]_{\phi}^{\cong}
  & {F_{X/k}}_*(\E_2) \ar[r]\ar@{=}[d] & 0\\
  0 \ar[r] & {F_{X/k}}_*(\E_1) \ar[r] & {F_{X/k}}_*(\F_2) \ar[r] &
  {F_{X/k}}_*(\E_2) \ar[r] & 0.}$$
of $\Ox_{X^{(1)}}$-modules is commutative. Taking
$F^*_{X/k}$ to the above exact sequence, we get the following
commutative diagram of $\Ox_X$-modules
$$\xymatrix{
  0 \ar[r] & F^*_{X/k}{F_{X/k}}_*(\E_1) \ar[r]\ar@{=}[d] & F^*_{X/k}{F_{X/k}}_*(\F_1) \ar[r]\ar[d]_{F^*_{X/k}(\phi)}^{\cong} & F^*_{X/k}{F_{X/k}}_*(\E_2) \ar[r]\ar@{=}[d] & 0\\
  0 \ar[r] & F^*_{X/k}{F_{X/k}}_*(\E_1) \ar[r] & F^*_{X/k}{F_{X/k}}_*(\F_2) \ar[r] & F^*_{X/k}{F_{X/k}}_*(\E_2) \ar[r] & 0.}$$
On the other hand, for any integer $0\leq l\leq n(p-1)$, consider
the exact sequence of $\Ox_X$-modules
$0\rightarrow\E_1\otimes_{\Ox_X}\mathrm{T}^l(\Omg^1_{X/k})\rightarrow\F_i\otimes_{\Ox_X}\mathrm{T}^l(\Omg^1_{X/k})\rightarrow\E_2\otimes_{\Ox_X}\mathrm{T}^l(\Omg^1_{X/k})\longrightarrow0.$ Because
$\E_j\otimes_{\Ox_X}\mathrm{T}^l(\Omg^1_{X/k})(j=1,2)$
are slope semi-stable bundles and
$\mu(\E_1)=\mu(\E_2)$, then
$\F_i\otimes_{\Ox_X}\mathrm{T}^l(\Omg^1_{X/k})(i=1,2)$
are also slope semi-stables. It follows from $(1)$ that the
canonical filtration
${\mathbb{F}^{\mathrm{can}}_{\F_i}}_\bullet$ of
$F^*_{X/k}{F_{X/k}}_*(\F_i)$ is also coincided with the
Harder-Narasimhan filtration of
$F^*_{X/k}{F_{X/k}}_*(\F_i)$.

Consider the canonical filtration
${\mathbb{F}^{\mathrm{can}}_{\E_i}}_\bullet$,
${\mathbb{F}^{\mathrm{can}}_{\F_i}}_\bullet$ of
$F^*_{X/k}{F_{X/k}}_*(\E_i)$ and
$F^*_{X/k}{F_{X/k}}_*(\F_i)$ respectively, $i=1,2$ (which
are coincided with Harder-Narasimhan filtration). Then we have the
following commutative diagram
$$\xymatrix@C=0.4cm@R=0.5cm{
  0 \ar[rr] && F^*_{X/k}{F_{X/k}}_*(\E_1) \ar[rr]\ar@{=}'[d][dd]\ar[dr] && F^*_{X/k}{F_{X/k}}_*(\F_1) \ar[rr]\ar'[d]_-{F^*_{X/k}(\phi)}^-{\cong}[dd]\ar[dr] && F^*_{X/k}{F_{X/k}}_*(\E_2) \ar[rr]\ar@{=}'[d][dd]\ar[dr] && 0\\
  & 0 \ar[rr] && \E_1 \ar[rr]\ar@{=}[dd] && \F_1 \ar[rr]\ar@{-->}[dd]_{?\exists} && \E_2 \ar[rr]\ar@{=}[dd] && 0 &\\
  0 \ar[rr] && F^*_{X/k}{F_{X/k}}_*(\E_1) \ar'[r][rr]\ar[dr] && F^*_{X/k}{F_{X/k}}_*(\F_2) \ar'[r][rr]\ar[dr] && F^*_{X/k}{F_{X/k}}_*(\E_2) \ar'[r][rr]\ar[dr] && 0\\
  & 0 \ar[rr] && \E_1 \ar[rr] && \F_2 \ar[rr] && \E_2 \ar[rr] && 0 .&\\
  }$$
Restricting the isomorphism $F^*_{X/k}(\phi):F^*_{X/k}{F_{X/k}}_*(\F_1)\rightarrow
F^*_{X/k}{F_{X/k}}_*(\F_2)$ to $V^{\F_i}_1$, we
get an isomorphism
$F^*_{X/k}(\phi)|_{V^{\F_i}_1}:V^{\F_i}_1\rightarrow
V^{\F_2}_1$, which induces a natural isomorphism
$\psi:\F_1\stackrel{\cong}{\rightarrow}\F_2$ with following commutative diagram
$$\xymatrix{
  0 \ar[r] & \E_1 \ar[r]\ar@{=}[d] & \F_1 \ar[r]\ar[d]_{\psi}^{\cong} & \E_2 \ar[r]\ar@{=}[d] & 0\\
  0 \ar[r] & \E_1 \ar[r] & \F_2 \ar[r] & \E_2 \ar[r] & 0.}$$
Hence $\Phi:\mathrm{Ext}^1_X(\E_1,\E_2)\rightarrow\mathrm{Ext}^1_{X^{(1)}}({F_{X/k}}_*(\E_1),{F_{X/k}}_*(\E_2))$
is an injection.
\end{proof}

\begin{Theorem}\label{Lee}
Let $k$ be an algebraically closed field of characteristic $p>0$, and $X$ a smooth projective curve of genus $g\geq 2$ over $k$. Then the morphism $$S^s_{\mathrm{Frob}}:\mathfrak{M}^{s}_X(r,d)\rightarrow\mathfrak{M}^{s}_{X^{(1)}}(r\cdot p,d+r(p-1)(g-1))$$
is a closed immersion. In particular, for any $\mathscr{L}\in\mathrm{Pic}(X)$, the morphism
$$S^{s,\mathscr{L}}_{\mathrm{Frob}}:\mathfrak{M}^{s}_X(r,\mathscr{L})\rightarrow\mathfrak{M}^{s}_{X^{(1)}}(r\cdot p,\mathrm{det}({F_{X/k}}_*(\E))),(\forall\E\in\mathfrak{M}^{ss}_X(r,\mathscr{L}))$$
is also a closed immersion.
\end{Theorem}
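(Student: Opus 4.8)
The plan is to verify that $S^s_{\mathrm{Frob}}$ is a closed immersion by checking the standard three conditions: it is proper, it is injective on points, and it induces injections on all tangent spaces (equivalently, it is unramified and proper, hence a closed immersion of varieties over an algebraically closed field). Properness is already established in Proposition \ref{Prop:SunMorp}(2), so that leg is free. The remaining work is to reduce injectivity on points and injectivity on tangent spaces to the two conclusions of Theorem \ref{Thm:Tangmap}. The crucial observation is that the hypotheses of Theorem \ref{Thm:Tangmap} are met in our curve setting: since $\dim X = 1$, we have $\Omg^1_{X/k} \cong \omega_X$ with $\mu(\omega_X) = 2g-2 > 0$ when $g \geq 2$; moreover on a smooth curve $\mathrm{T}^l(\omega_X)$ is just the line bundle $\omega_X^{\otimes l}$, so $\E \otimes_{\Ox_X} \mathrm{T}^l(\omega_X)$ is a twist of $\E$ by a line bundle and is therefore automatically (semi)-stable whenever $\E$ is. Thus every stable $\E$ on $X$ satisfies the hypotheses of Theorem \ref{Thm:Tangmap}, and I may apply both parts freely.

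First I would treat injectivity on closed points. Given $[\E_1],[\E_2] \in \mathfrak{M}^s_X(r,d)$ with $S^s_{\mathrm{Frob}}([\E_1]) = S^s_{\mathrm{Frob}}([\E_2])$, the defining property of the moduli point gives ${F_{X/k}}_*(\E_1) \cong {F_{X/k}}_*(\E_2)$ as stable bundles (a stable bundle has no nontrivial $S$-equivalences, so equality of moduli points is genuine isomorphism). By Theorem \ref{Thm:Tangmap}(1), the isomorphism of push-forwards forces $\E_1 \cong \E_2$, hence $[\E_1] = [\E_2]$. This establishes set-theoretic injectivity of the proper morphism $S^s_{\mathrm{Frob}}$.

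Next I would handle injectivity of the differential. The key point is that for a stable bundle $\E$, the Zariski tangent space to $\mathfrak{M}^s_X(r,d)$ at $[\E]$ is canonically $\mathrm{Ext}^1_X(\E,\E)$, and similarly the tangent space at the image point is $\mathrm{Ext}^1_{X^{(1)}}({F_{X/k}}_*(\E),{F_{X/k}}_*(\E))$. The differential of $S^s_{\mathrm{Frob}}$ at $[\E]$ is exactly the map $\Phi$ of Theorem \ref{Thm:Tangmap}(2) with $\E_1 = \E_2 = \E$: it sends a first-order deformation, presented as an extension class $[0 \to \E \to \F \to \E \to 0]$, to the push-forward extension. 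Since $\mu(\E) = \mu(\E)$ trivially and the semistability hypotheses hold as noted above, Theorem \ref{Thm:Tangmap}(2) shows this $\Phi$ is injective, so $dS^s_{\mathrm{Frob}}$ is injective at every closed point and $S^s_{\mathrm{Frob}}$ is unramified.

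Combining these, $S^s_{\mathrm{Frob}}$ is a proper, injective, unramified morphism of varieties over an algebraically closed field, hence a closed immersion. For the final assertion about $S^{s,\mathscr{L}}_{\mathrm{Frob}}$, I would note that $\mathfrak{M}^s_X(r,\mathscr{L})$ and the corresponding fixed-determinant locus downstairs are closed subvarieties (by Lemma \ref{Lem:FrobDet}, the push-forward has constant determinant on $\mathfrak{M}^s_X(r,\mathscr{L})$), and that $S^{s,\mathscr{L}}_{\mathrm{Frob}}$ is the restriction of a closed immersion to a closed subvariety, which remains a closed immersion. The main obstacle is purely bookkeeping at the level of tangent spaces: one must be careful that $\Phi$ genuinely coincides with the intrinsic differential $dS^s_{\mathrm{Frob}}$, which amounts to recalling that the functorial assignment of push-forward to a family of extensions is precisely how the functor ${F_{X/k}}_*$ acts on the tangent-obstruction deformation data; once this identification is made explicit, everything else follows formally from Theorem \ref{Thm:Tangmap}.
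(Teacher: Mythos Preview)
Your proposal is correct and follows essentially the same route as the paper's own proof: both verify the hypotheses of Theorem~\ref{Thm:Tangmap} by noting that on a curve $\mathrm{T}^l(\Omega^1_{X/k})$ is the line bundle $(\Omega^1_{X/k})^{\otimes l}$, then invoke Proposition~\ref{Prop:SunMorp}(2) for properness, Theorem~\ref{Thm:Tangmap}(1) for injectivity on points, and Theorem~\ref{Thm:Tangmap}(2) for injectivity of the tangent map, concluding by the standard closed-immersion criterion. Your write-up is in fact a bit more careful than the paper's in justifying that equality of moduli points of stable bundles gives an isomorphism and in spelling out the fixed-determinant case via Lemma~\ref{Lem:FrobDet}.
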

\begin{proof}Since $\dim X=1$, $\mathrm{T}^l(\Omg^1_{X/k})\cong(\Omg^1_{X/k})^{\otimes l}$ for any integer $0\leq l\leq n(p-1)$. Then for any stable bundle $\E$ and any integer $0\leq l\leq n(p-1)$, $\E\otimes_{\Ox_X}\mathrm{T}^l(\Omg^1_{X/k})$ are stable. Therefore, by Proposition\ref{Prop:SunMorp} and Theorem\ref{Thm:Tangmap}, the morphism $$S^s_{\mathrm{Frob}}:\mathfrak{M}^{s}_X(r,d)\rightarrow\mathfrak{M}^{s}_{X^{(1)}}(r\cdot p,d+r(p-1)(g-1))$$ is an injective proper morphism. Since for any $[\E]\in\mathfrak{M}^{s}_X(r,d)$, ${F_{X/k}}_*(\E)$ is also stable by \cite[Theorem 2.2]{Sun08}. Then the tangent space of $\mathfrak{M}^{s}_X(r,d)$ at $[\E]$ is $$T_{[\E]}\mathfrak{M}^{s}_X(r,d)=\mathrm{Ext}^1_X(\E,\E),$$
and tangent space of $\mathfrak{M}^{s}_{X^{(1)}}(r\cdot p,d+r(p-1)(g-1))$ at $[{F_{X/k}}_*(\E)]$ is $$T_{[{F_{X/k}}_*(\E)]}\mathfrak{M}^{s}_{X^{(1)}}(r\cdot p,d+r(p-1)(g-1))=\mathrm{Ext}^1_X({F_{X/k}}_*(\E),{F_{X/k}}_*(\E)).$$
Moreover, tangent map of $S^s_{\mathrm{Frob}}$ at $[\E]$ is precisely the homomorphism
\begin{eqnarray*}
T_{S^s_{\mathrm{Frob}},[\E]}:\mathrm{Ext}^1_X(\E,\E)&\rightarrow&\mathrm{Ext}^1_X({F_{X/k}}_*(\E),{F_{X/k}}_*(\E))\\
\/[0\rightarrow\E\rightarrow\F\rightarrow\E\rightarrow 0\/]&\mapsto&\/[0\rightarrow{F_{X/k}}_*(\E)\rightarrow{F_{X/k}}_*(\F)\rightarrow{F_{X/k}}_*(\E)\rightarrow 0\/]
\end{eqnarray*}
Then by Theorem \ref{Thm:Tangmap} we have the tangent map $T_{S^s_{\mathrm{Frob}},[\E]}$ is an injective homomorphism. Therefore the morphism $S^s_{\mathrm{Frob}}$ is a closed immersion by the well known criterion of closed immersion.
\end{proof}

\section{The Stratum of Frobenius Stratification with Maximal Harder-Narasimhan Polygon}

\begin{Definition}\label{Def:LocalSystem}
Let $k$ be an algebraically closed field, $X$ a smooth projective curve over $k$. A \emph{local system} on $X$ is pair $(\E,\nabla)$ consists of a vector bundle $\E$ on $X$ and a connection $\nabla$ on $X$. A local system $(\E,\nabla)$ is called \emph{semi-stable} (resp. \emph{stable}) if any proper $\nabla$-invariant subbundle $\F\subset\E$ satisfies $\mu(\F)\leq\E$ (resp. $\mu(\F)<\E$). An \emph{oper} on $X$ is a local sytem $(\E,\nabla)$ together with a filtration of subbundles $\E_\bullet$: $0=\E_m\subset\E_{m-1}\subset\cdots\subset\E_1\subset\E_0=\E$, such that
\begin{itemize}
\item[$(i)$.] $\nabla(\E_i)\subseteq\E_{i-1}\otimes_{\Ox_X}\Omg_{X/k}$ for any $1\leq i\leq m-1$;
\item[$(ii)$.] $\E_i/\E_{i+1}\stackrel{\nabla}{\rightarrow}(\E_{i-1}/\E_i)\otimes_{\Ox_X}\Omg_{X/k}$ is an isomorphisms for any $1\leq i\leq m-1$.
\end{itemize}
\end{Definition}

Let $r\in\Z_{>0}$ and $d\in\Z$ such that $r|d$. We introduce the \emph{oper-polygon} for pair $(r,d)$ in the plane $\R^2$
$$\Pg_{r,d}^{oper}:~\text{with vertices}~(i,i\frac{d}{r}+i(r-i)(g-1))~\text{for}~0\leq i\leq r.$$
$\Pg_{r,0}^{oper}$ is simply denoted by $\Pg_r^{oper}$ (See also section 5.3 of \cite{JoshiPauly09}).

\begin{Remark}\label{Remark:OperPgn}
If $g\geq 2$, then the Harder-Narasimhan polygon of any oper $(\E,\nabla,\E_\bullet)$ of rank $r$, degree $d$ and type $1$ (i.e. $\rank(\E_i/\E_{i+1})=1$ for $0\leq i\leq r-1$) is $\Pg_{r,d}^{oper}$. Since in this case the Harder-Narasimhan filtration of $\E$ is just $\E_\bullet$.
\end{Remark}

\begin{Lemma}\cite[Lemma 5.1.1]{JoshiPauly09}\label{Lemma:InstLS}
Let $k$ be an algebraically closed field, $X$ a smooth projective curve of genus $g$ over $k$. Let $(\E,\nabla)$ be a semi-satble local system with Harder-Narasimhan filtration $0=\E_m\subset\E_{m-1}\subset\cdots\subset \E_1\subset \E_0=\E$.
Then for nay $1\leq i\leq m-1$, $\mu(\E_i/\E_{i-1})-\mu(\E_{i+1}/\E_i)\leq 2g-2$.
\end{Lemma}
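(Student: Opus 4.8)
The plan is to bound the gap between the slopes of two consecutive Harder--Narasimhan graded pieces by manufacturing, out of the connection $\nabla$, a nonzero $\Ox_X$-linear map from each filtration step into the corresponding quotient twisted by $\Omg^1_{X/k}$, and then to read off the inequality from the elementary slope estimate for homomorphisms of sheaves on a curve. Throughout I would write $\mathrm{gr}_i:=\E_i/\E_{i+1}$ for the graded pieces, so that the Harder--Narasimhan property gives $\mu(\mathrm{gr}_0)<\mu(\mathrm{gr}_1)<\cdots<\mu(\mathrm{gr}_{m-1})$, and I would recall $\degree(\Omg^1_{X/k})=2g-2$. We may assume $m\geq 2$, as otherwise the range $1\leq i\leq m-1$ is empty. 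It then suffices to show $\mu(\mathrm{gr}_i)-\mu(\mathrm{gr}_{i-1})\leq 2g-2$ for each $1\leq i\leq m-1$.

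First I would form the \emph{second fundamental form} of the subbundle $\E_i\subset\E$. Composing the restriction $\nabla|_{\E_i}:\E_i\rightarrow\E\otimes_{\Ox_X}\Omg^1_{X/k}$ with the projection onto $(\E/\E_i)\otimes_{\Ox_X}\Omg^1_{X/k}$ yields a map
\[
\sigma_i:\E_i\longrightarrow(\E/\E_i)\otimes_{\Ox_X}\Omg^1_{X/k}.
\]
The Leibniz rule $\nabla(fs)=f\nabla(s)+s\otimes d(f)$ shows that $\sigma_i$ is $\Ox_X$-linear, since for $s\in\E_i$ the term $s\otimes d(f)$ lies in $\E_i\otimes_{\Ox_X}\Omg^1_{X/k}$ and hence dies in the quotient. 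Next I would pin down the relevant extremal slopes: as $\E_i$ is assembled from the top graded pieces $\mathrm{gr}_i,\dots,\mathrm{gr}_{m-1}$, its minimal-slope quotient is $\E_i/\E_{i+1}=\mathrm{gr}_i$, so $\mu_{\min}(\E_i)=\mu(\mathrm{gr}_i)$; dually $\E/\E_i$ is assembled from $\mathrm{gr}_0,\dots,\mathrm{gr}_{i-1}$, so its maximal-slope subsheaf is $\E_{i-1}/\E_i=\mathrm{gr}_{i-1}$ and $\mu_{\max}(\E/\E_i)=\mu(\mathrm{gr}_{i-1})$. Granting for the moment that $\sigma_i\neq 0$, the standard estimate for a nonzero homomorphism on a curve (its image is a quotient of the source and a subsheaf of the target, so $\mu_{\min}$ of the source is at most $\mu_{\max}$ of the target) gives
\[
\mu(\mathrm{gr}_i)=\mu_{\min}(\E_i)\leq\mu_{\max}\big((\E/\E_i)\otimes_{\Ox_X}\Omg^1_{X/k}\big)=\mu(\mathrm{gr}_{i-1})+(2g-2),
\]
which is exactly the desired inequality.

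The main obstacle is therefore the nonvanishing $\sigma_i\neq0$, equivalently the assertion that no step $\E_i$ of the Harder--Narasimhan filtration is $\nabla$-invariant, and this is precisely where the semistability of the local system $(\E,\nabla)$ of Definition \ref{Def:LocalSystem} must enter. For $1\leq i\leq m-1$ both $\E_i$ and $\E/\E_i$ are nonzero, and since $\E_i$ collects the highest-slope graded pieces one has $\mu(\E/\E_i)<\mu(\E)<\mu(\E_i)$; in particular $\mu(\E_i)>\mu(\E)$. If $\sigma_i$ vanished, then $\nabla(\E_i)\subseteq\E_i\otimes_{\Ox_X}\Omg^1_{X/k}$, so $\E_i$ would be a proper $\nabla$-invariant subbundle with $\mu(\E_i)>\mu(\E)$, contradicting the semistability of $(\E,\nabla)$. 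Hence $\sigma_i\neq0$ for every $1\leq i\leq m-1$, and combining this with the slope estimate above and $\degree(\Omg^1_{X/k})=2g-2$ finishes the argument. I expect the only genuinely careful point to be the clean identification of the extremal slopes $\mu_{\min}(\E_i)=\mu(\mathrm{gr}_i)$ and $\mu_{\max}(\E/\E_i)=\mu(\mathrm{gr}_{i-1})$ from the Harder--Narasimhan filtration; the construction of $\sigma_i$ and the final estimate are formal.
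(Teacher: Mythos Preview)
Your argument is correct. The paper itself does not prove this lemma: it is quoted verbatim from \cite[Lemma 5.1.1]{JoshiPauly09} with no accompanying proof, so there is no in-paper argument to compare against. What you have written is exactly the standard proof (and the one in the cited reference): form the $\Ox_X$-linear second fundamental form $\sigma_i:\E_i\to(\E/\E_i)\otimes\Omg^1_{X/k}$, use semistability of $(\E,\nabla)$ to rule out $\nabla$-invariance of the Harder--Narasimhan step $\E_i$ and hence force $\sigma_i\neq 0$, and then read off the slope gap from $\mu_{\min}(\E_i)\leq\mu_{\max}(\E/\E_i)+(2g-2)$. Your identification of $\mu_{\min}(\E_i)=\mu(\mathrm{gr}_i)$ and $\mu_{\max}(\E/\E_i)=\mu(\mathrm{gr}_{i-1})$ via the induced Harder--Narasimhan filtrations on the sub and quotient is the right bookkeeping, and the Leibniz-rule verification that $\sigma_i$ is $\Ox_X$-linear is the one nontrivial formal point. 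Nothing is missing.
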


\begin{Proposition}\label{Prop:MaxLS}
Let $k$ be an algebraically closed field, $X$ a smooth projective curve of genus $g$ over $k$, $r\in\Z_{>0}$, $d\in\Z$ such that $r|d$. Let $(\E,\nabla)$ be a semi-stable local system $(\E,\nabla)$ of rank $r$ and degree $d$ on $X$. Then
\begin{itemize}
\item[$(1)$.] $\Pg_{r,d}^{oper}\succcurlyeq\HNP(\E)$.
\item[$(2)$.] $\Pg_{r,d}^{oper}=\HNP(\E)$ if and only if $(\E,\nabla,\E^{\mathrm{HN}}_{\bullet})$ is an oper of type $1$, where $\E^{\mathrm{HN}}_{\bullet}$ is the Harder-Narasimhan filtration of $\E$.
\end{itemize}
\end{Proposition}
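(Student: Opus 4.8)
The plan is to prove (1) by a purely combinatorial majorization argument comparing slope functions, and to prove (2) by tracing the equality case of that argument back to the structure of a connection along the Harder--Narasimhan filtration.

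For (1), write the Harder--Narasimhan filtration of $\E$ with graded pieces of ranks $a_1,\dots,a_m$ and slopes $\nu_1>\nu_2>\cdots>\nu_m$, so that $\HNP(\E)$ is the concave polygon whose vertices sit at the \emph{integer} abscissae $s_j=a_1+\cdots+a_j$. Encode it by the non-increasing step function $\phi$ on $[0,r]$ taking the value $\nu_l$ on $(s_{l-1},s_l)$, so that $\HNP(\E)(t)=\int_0^t\phi$. By Lemma \ref{Lemma:InstLS} every downward jump of $\phi$ is at most $2g-2$, and these jumps occur only at integers since the $s_l$ are integers; in particular $\phi$ is constant on each open unit interval. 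I would compare $\phi$ with the slope $\bar\phi(x)=\frac{d}{r}+(r-2x)(g-1)$ of the smooth parabola $h(x)=x\frac{d}{r}+x(r-x)(g-1)$, whose restriction to integers gives exactly the vertices of $\Pg_{r,d}^{oper}$. Setting $\psi=\phi-\bar\phi$, a direct check shows that $\psi$ has slope $2(g-1)$ on each open unit interval and drops by at most $2(g-1)$ at each integer, so the right-hand values $\psi(n^+)$ form a non-decreasing sequence; moreover $\int_0^r\psi=0$ because $\deg\E=d$. Integrating over a unit interval gives $\int_n^{n+1}\psi=\psi(n^+)+(g-1)$, whence for integer $t$ one has $\int_0^t\psi=t(g-1)+\sum_{n<t}\psi(n^+)$. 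Since every left partial average of a non-decreasing sequence is bounded by its total average, $\sum_{n<t}\psi(n^+)\le -t(g-1)$, that is $\HNP(\E)(t)\le h(t)=\Pg_{r,d}^{oper}(t)$ at every integer $t$. Because the vertices of $\HNP(\E)$ lie at integers and $\Pg_{r,d}^{oper}$ is concave, this inequality at the vertices $s_j$ propagates to all of $[0,r]$ by linear interpolation (the chord of $\HNP(\E)$ lies below the chord of $\Pg_{r,d}^{oper}$, which lies below $\Pg_{r,d}^{oper}$), giving $\Pg_{r,d}^{oper}\succcurlyeq\HNP(\E)$.

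For (2), the implication ``oper of type $1$ $\Rightarrow$ equality'' is immediate from Remark \ref{Remark:OperPgn}. For the converse I would retrace the equalities above (taking $g\geq 2$, so $2(g-1)>0$). Equality of polygons forces $\int_0^t\psi=0$ for all integer $t$, hence $\psi(n^+)=-(g-1)$ for every $n$; this makes each jump of $\phi$ equal to exactly $2(g-1)$, and since then $\phi$ jumps at every integer, it forces all $a_l=1$, i.e. the Harder--Narasimhan filtration has rank-one graded pieces (type $1$) whose slopes form an arithmetic progression of common difference $2g-2$. It remains to upgrade this numerical coincidence to the two conditions of Definition \ref{Def:LocalSystem}. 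This is exactly the equality case of Lemma \ref{Lemma:InstLS}: the bound there arises from the $\Ox_X$-linear second fundamental form $\E_i/\E_{i+1}\to(\E_{i-1}/\E_i)\otimes_{\Ox_X}\Omg^1_{X/k}$ induced by $\nabla$, which for our rank-one pieces is a map of line bundles of equal degree once the gap equals $2g-2$. The semi-stability of the local system $(\E,\nabla)$ forbids the Harder--Narasimhan pieces from being $\nabla$-invariant (that would force a destabilizing subbundle of slope $>\mu(\E)$ to have slope $\leq\mu(\E)$), so these maps are nonzero, hence isomorphisms. The nonvanishing simultaneously yields $\nabla(\E_i)\subseteq\E_{i-1}\otimes_{\Ox_X}\Omg^1_{X/k}$ (condition (i), since a nonzero such map raises slope by at most $2g-2$ and the pieces are separated by exactly $2g-2$, so $\nabla$ shifts the filtration by at most one step), while the isomorphism is condition (ii); thus $(\E,\nabla,\E^{\mathrm{HN}}_\bullet)$ is an oper of type $1$.

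The main obstacle is this converse in (2). The combinatorics of (1) is elementary and self-contained, but extracting the oper structure from the equality requires a careful reading of the equality case of Lemma \ref{Lemma:InstLS}, namely that the relevant second fundamental forms are forced to be isomorphisms and that $\nabla$ is strict with respect to the Harder--Narasimhan filtration. I expect the cleanest route is to prove a refined version of Lemma \ref{Lemma:InstLS} that records this equality criterion directly, so that the converse in (2) becomes a formal consequence.
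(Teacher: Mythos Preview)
Your proposal is correct but proceeds quite differently from the paper. For part (1) the paper argues by contradiction: it assumes some vertex of $\HNP(\E)$ lies strictly above $\Pg_{r,d}^{oper}$, isolates the first segment of $\HNP(\E)$ crossing the oper polygon, and then uses Lemma~\ref{Lemma:InstLS} to bound the remaining slopes from below, summing to obtain $\deg(\E)>d$, a contradiction. Your direct majorization via the auxiliary function $\psi=\phi-\bar\phi$ and the non-decreasing sequence $\psi(n^+)$ is more structured: it controls the gap at every integer simultaneously and makes the equality case transparent, so that the numerical part of (2) (all ranks equal to $1$, all slope gaps equal to $2g-2$) drops out immediately from the partial-sum inequality turning into an equality. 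For part (2) the paper gives no argument of its own at all --- it simply remarks that the proof of \cite[Theorem 5.3.1]{JoshiPauly09}, stated there for $d=0$, goes through verbatim for any $d$ with $r\mid d$. Your sketch essentially reconstructs that external argument: Griffiths transversality follows from $\mu_{\min}(\E_i)>\mu_{\max}((\E/\E_{i-1})\otimes\Omega^1_{X/k})$ via the two-step slope gap, and the second fundamental forms are nonzero (else $\E_i$ would be $\nabla$-invariant with $\mu(\E_i)>\mu(\E)$) maps of line bundles of equal degree, hence isomorphisms. The upshot is that your route is self-contained and exposes the mechanism behind both parts, whereas the paper's route is shorter but outsources (2) entirely to the reference.
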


\begin{proof}$(1)$. Let $0=\E_m\subset\E_{m-1}\subset\cdots\subset\E_1\subset \E_0=\E$ be the Harder-Narasimhan filtration of $\E$. Suppose that $\Pg_{r,d}^{oper}\not\succcurlyeq\HNP(\E)$. Then there exists some $0\leq j\leq m-1$ such that the point $(\rank(\E_{j+1}),\degree(\E_{j+1}))$ lies on or blow the $\Pg_{r,d}^{oper}$ and the point $(\rank(\E_j),\degree(\E_j))$ lies above the $\Pg_{r,d}^{oper}$, i.e. there exist $0\leq j\leq m-1$ and $1\leq i\leq r$ with the properties
$\mu(\E_j/\E_{j+1})>\frac{d}{r}+(r-2i+1)(g-1)$ and $\degree(\E_j)>i\frac{d}{r}+i(r-i)(g-1)$.
Then by Lemma \ref{Lemma:InstLS}, we have
\begin{eqnarray*}
\degree(\E)&=&\degree(\E_j)+\sum\limits_{l=0}^{j-1}\rank(\E_l/\E_{l+1})\cdot\mu(\E_l/\E_{l+1})\\
&\geq&\degree(\E_j)+\sum\limits_{l=0}^{j-1}\rank(\E_l/\E_{l+1})\cdot[\mu(\E_j/\E_{j+1})-(j-l)(g-1)]\\
&\geq&\degree(\E_j)+\sum\limits_{l=0}^{r-i}[\mu(\E_j/\E_{j+1})-(r-i-l+1)(g-1)]\\
&>&d.
\end{eqnarray*}
This contradict to the fact $\degree(\E)=d$. Hence $\Pg_{r,d}^{oper}\succcurlyeq\HNP(\E)$.

$(2)$. Note that the part two of \cite[Theorem 5.3.1]{JoshiPauly09} make the assumption that $d=0$. In fact, their proof is also valid for any $d$ such that $r|d$.
\end{proof}

\begin{Theorem}\label{Thm:Eqiv}
Let $k$ be an algebraically closed field of characteristic $p>0$, and $X$ a smooth projective curve of genus $g\geq 2$ over $k$. Let $\E$ be a stable vector bundle on $X^{(1)}$ of rank $p$ and degree $d$. Then the following conditions are equivalent
\begin{itemize}
\item[$(i)$.] $\HNP(F^*_{X/k}(\E))=\Pg_{p,pd}^{oper}$.
\item[$(ii)$.] $\mu_{\max}(F^*_{X/k}(\E))-\mu_{\min}(F^*_{X/k}(\E))=(p-1)(2g-2)$.
\item[$(iii)$.] $\E\cong{F_{X/k}}_*(\mathscr{L})$ for some line bundle $\mathscr{L}$ on $X$.
\item[$(iv)$.] $\HNP(F^*_{X/k}(\E))\succcurlyeq\HNP(F^*_{X/k}(\F))$ for any $\F\in\mathfrak{M}^{s}_{X^{(1)}}(p,d)$.
\end{itemize}
\end{Theorem}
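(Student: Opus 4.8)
The plan is to prove that all four conditions are equivalent to $(i)$, organized around the fact that the canonical connection turns every stable bundle into a maximally-behaved local system. First I would record the setup: for any stable $\F\in\mathfrak{M}^{s}_{X^{(1)}}(p,d)$, the canonical connection of \S2 makes $(F^*_{X/k}(\F),\nabla_{\mathrm{can}})$ a local system of rank $p$ and degree $pd$; since $\nabla_{\mathrm{can}}$ has $p$-curvature zero, Cartier descent identifies its $\nabla$-invariant subbundles with the subbundles of $\F$ (rank preserved, degree multiplied by $p$), so stability of $\F$ is equivalent to stability of this local system. As $p\mid pd$, both Proposition \ref{Prop:MaxLS} and Lemma \ref{Lemma:InstLS} apply to $(F^*_{X/k}(\F),\nabla_{\mathrm{can}})$. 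I would then establish the cycle $(iii)\Rightarrow(i)$, $(i)\Leftrightarrow(ii)$, $(i)\Leftrightarrow(iv)$, and finally $(i)\Rightarrow(iii)$, the last being the substantive step.

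For $(i)\Leftrightarrow(ii)$: a direct computation shows the segment of $\Pg_{p,pd}^{oper}$ joining its $i$-th and $(i{+}1)$-st vertices has slope $d+(p-1-2i)(g-1)$, so its extreme slopes are $d+(p-1)(g-1)$ and $d-(p-1)(g-1)$, differing by $(p-1)(2g-2)$; this gives $(i)\Rightarrow(ii)$. Conversely, applying Lemma \ref{Lemma:InstLS} to the semistable local system $(F^*_{X/k}(\E),\nabla_{\mathrm{can}})$, consecutive slopes of its Harder--Narasimhan filtration drop by at most $2g-2$, and with at most $p$ graded pieces this forces $\mu_{\max}-\mu_{\min}\leq(p-1)(2g-2)$; equality as in $(ii)$ is possible only when there are exactly $p$ rank-one graded pieces whose slopes decrease by exactly $2g-2$, and matching the total degree $pd$ pins the top slope to $d+(p-1)(g-1)$, so the polygon coincides vertex-by-vertex with $\Pg_{p,pd}^{oper}$, giving $(i)$. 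For $(i)\Leftrightarrow(iv)$: Proposition \ref{Prop:MaxLS}(1) gives $\Pg_{p,pd}^{oper}\succcurlyeq\HNP(F^*_{X/k}(\F))$ for every stable $\F$, so $(i)$ immediately yields $(iv)$; conversely $(iv)$, combined with the existence of a stable bundle realizing $\Pg_{p,pd}^{oper}$ — produced by $(iii)\Rightarrow(i)$ together with the Lange--Pauly stability of Frobenius push-forwards of line bundles \cite[Proposition 1.2]{LangePauly08} — forces $\HNP(F^*_{X/k}(\E))$ to lie both above and below $\Pg_{p,pd}^{oper}$, hence equal to it.

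The implication $(iii)\Rightarrow(i)$ uses Sun's structure theorem \cite[Theorem 3.7]{Sun08}: writing $\E\cong{F_{X/k}}_*(\mathscr{L})$, the canonical filtration of $F^*_{X/k}{F_{X/k}}_*(\mathscr{L})$ has $p$ line-bundle graded pieces $\mathscr{L}\otimes_{\Ox_X}(\Omg^1_{X/k})^{\otimes l}$ for $0\leq l\leq p-1$, whose slopes increase in steps of $2g-2>0$; by uniqueness of the Harder--Narasimhan filtration the canonical filtration is the HN filtration, and substituting $\degree\mathscr{L}=d-(p-1)(g-1)$ one checks that its polygon is exactly $\Pg_{p,pd}^{oper}$ (equivalently, by Remark \ref{Remark:OperPgn}, since this filtration realizes a type-$1$ oper).

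The heart of the proof is $(i)\Rightarrow(iii)$. Assuming $(i)$, the last segment of $\Pg_{p,pd}^{oper}$ shows the minimal-slope graded piece of the HN filtration of $F^*_{X/k}(\E)$ is a line bundle $\mathscr{L}$ with $\degree\mathscr{L}=d-(p-1)(g-1)$, giving a surjection $F^*_{X/k}(\E)\twoheadrightarrow\mathscr{L}$. Under the adjunction $\Hom_X(F^*_{X/k}(\E),\mathscr{L})\cong\Hom_{X^{(1)}}(\E,{F_{X/k}}_*(\mathscr{L}))$ this corresponds to a nonzero homomorphism $\E\to{F_{X/k}}_*(\mathscr{L})$. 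Now ${F_{X/k}}_*(\mathscr{L})$ has rank $p$ and degree $\degree\mathscr{L}+(p-1)(g-1)=d$, and is stable by \cite[Proposition 1.2]{LangePauly08}, so it shares the slope $d/p$ with the stable bundle $\E$; a nonzero map between stable bundles of equal slope is an isomorphism, whence $\E\cong{F_{X/k}}_*(\mathscr{L})$. I expect the main obstacle to be precisely this step — ensuring that the correct (minimal-slope, rank-one) graded piece is extracted so that the degree bookkeeping makes source and target equal in slope, and that the adjoint map is genuinely nonzero — after which the stable-same-slope rigidity closes the argument.
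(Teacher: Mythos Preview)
Your proof is correct and follows essentially the same cycle as the paper, but with two genuine differences worth noting. First, the paper dispatches $(ii)\Leftrightarrow(iii)$ by citing \cite[Proposition 1.4]{MehtaPauly07} as a black box, whereas you prove $(i)\Rightarrow(iii)$ directly: extract the minimal-slope rank-one quotient $\mathscr{L}$ of $F^*_{X/k}(\E)$, apply the adjunction $\Hom_X(F^*_{X/k}(\E),\mathscr{L})\cong\Hom_{X^{(1)}}(\E,{F_{X/k}}_*(\mathscr{L}))$, and conclude via the rigidity of nonzero maps between stable bundles of equal slope. This is self-contained and in fact recovers the content of the Mehta--Pauly result in the case at hand. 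Second, you give a direct combinatorial argument for $(ii)\Rightarrow(i)$ using Lemma \ref{Lemma:InstLS} (the $2g-2$ bound on consecutive slope drops forces $p$ rank-one pieces with prescribed slopes), while the paper only records $(i)\Rightarrow(ii)$ as obvious and closes the loop through $(iii)$ and $(iv)$. The remaining implications---$(iii)\Rightarrow(i)$ via the canonical filtration being the Harder--Narasimhan filtration, and $(i)\Leftrightarrow(iv)$ via Proposition \ref{Prop:MaxLS} together with the existence of ${F_{X/k}}_*(\mathscr{L})$ realizing the oper polygon---match the paper's reasoning exactly. Your route buys independence from the Mehta--Pauly reference; the paper's buys brevity.
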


\begin{proof} $(i)\Rightarrow(ii)$ is obvious by definition of $\Pg_{p,pd}^{oper}$.

$(ii)\Leftrightarrow(iii)$ follows from \cite[Proposition 1.4]{MehtaPauly07}.

$(iii)\Rightarrow(iv)$. By \cite[Theorem 5.3]{JRXY06}, we have $(F^*_{X/k}{F_{X/k}}_*(\mathscr{L}),\nabla_{\mathrm{can}})$ is an oper of type $1$. It follows from Remark \ref{Remark:OperPgn} that $\HNP(F^*_{X/k}{F_{X/k}}_*(\mathscr{L}))=\Pg_{p,pd}^{oper}$. Then $\HNP(F^*_{X/k}{F_{X/k}}_*(\mathscr{L}))\succcurlyeq\HNP(F^*_{X/k}(\F))$ for any $\F\in\mathfrak{M}^{s}_{X^{(1)}}(p,d)$ by Proposition \ref{Prop:MaxLS}, since $(F^*_{X/k}(\F),\nabla_{\mathrm{can}})$ is a semi-stable local system by Cartier's theorem (\cite[Theorem 5.1]{Katz70}).

$(iv)\Rightarrow(i)$. For any $\F\in\mathfrak{M}^{s}_{X^{(1)}}(p,d)$, $(F^*_{X/k}(\F),\nabla_{\mathrm{can}})$ is a semi-stable local system. Then $\Pg_{p,pd}^{oper}\succcurlyeq\HNP(F^*_{X/k}(\F))$ by Lemma \ref{Prop:MaxLS}. On the other hand, choose any line bundle $\mathscr{L}$ of degree $d-(p-1)(g-1)$, then ${F_{X/k}}_*(\mathscr{L})$ is a stable vector bundle of rank $p$ and degree $d$ and $\HNP(F^*_{X/k}{F_{X/k}}_*(\mathscr{L}))=\Pg_{p,pd}^{oper}$. Hence if $\HNP(F^*_{X/k}(\E))\succcurlyeq\HNP(F^*_{X/k}(\F))$ for any $\F\in\mathfrak{M}^{s}_{X^{(1)}}(p,d)$, then $\HNP(F^*_{X/k}(\E))=\Pg_{p,pd}^{oper}$.
\end{proof}

\begin{Corollary}\label{MaxFrobStr}
Let $k$ be an algebraically closed field of characteristic $p>0$, and $X$ a smooth projective curve of genus $g\geq 2$ over $k$. Then the subset $$W=\{\E\in\mathfrak{M}^{s}_{X^{(1)}}(p,d)~|~\HNP(F^*_{X/k}(\E))\succcurlyeq\HNP(F^*_{X/k}(\F))~\text{for any}~\F\in\mathfrak{M}^{s}_{X^{(1)}}(p,d)\}$$
is a closed sub-variety of $\mathfrak{M}^{s}_{X^{(1)}}(p,d)$, which is isomorphic to Jacobian variety $\Jac_X$ of $X$.
In particular, $W$ is an irreducible smooth projective variety of dimension $g$.
\end{Corollary}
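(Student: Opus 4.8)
The plan is to reduce the corollary entirely to the two main results already in hand, namely the equivalence in Theorem \ref{Thm:Eqiv} and the closed-immersion statement of Theorem \ref{Lee}. The first observation is that the defining condition of $W$ is verbatim condition $(iv)$ of Theorem \ref{Thm:Eqiv}. Since $g\geq 2$, that theorem applies to every $[\E]\in\mathfrak{M}^{s}_{X^{(1)}}(p,d)$, and the equivalence $(iii)\Leftrightarrow(iv)$ gives, purely set-theoretically,
$$W=\{[\E]\in\mathfrak{M}^{s}_{X^{(1)}}(p,d)~|~\E\cong{F_{X/k}}_*(\mathscr{L})\text{ for some }\mathscr{L}\in\Pic(X)\}.$$
So the first step is simply to rewrite $W$ in this form.

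Next I would match numerical invariants to realize $W$ as the image of a single morphism. If $\E={F_{X/k}}_*(\mathscr{L})$ has rank $p$ and degree $d$, the degree formula used throughout (the $r=1$ case of ${F_{X/k}}_*$ raising rank to $p$ and degree to $\deg(\mathscr{L})+(p-1)(g-1)$) forces $\deg(\mathscr{L})=d-(p-1)(g-1)$; conversely, for every $\mathscr{L}\in\Pic^{\,d-(p-1)(g-1)}(X)$ the bundle ${F_{X/k}}_*(\mathscr{L})$ is stable of rank $p$ and degree $d$ because $g\geq 2$. Hence $W$ is exactly the image of
$$S^{s}_{\mathrm{Frob}}:\mathfrak{M}^{s}_X(1,d-(p-1)(g-1))\rightarrow\mathfrak{M}^{s}_{X^{(1)}}(p,d).$$
Then I invoke Theorem \ref{Lee} with $r=1$: this $S^{s}_{\mathrm{Frob}}$ is a closed immersion. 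Consequently its image $W$ is a closed subvariety of $\mathfrak{M}^{s}_{X^{(1)}}(p,d)$, and $W$ is isomorphic to the source. Since rank-one stability is automatic, the source is $\mathfrak{M}^{s}_X(1,d-(p-1)(g-1))=\Pic^{\,d-(p-1)(g-1)}(X)$, a torsor under $\Pic^0(X)=\Jac_X$, and therefore isomorphic to $\Jac_X$. As $\Jac_X$ is an irreducible smooth projective variety of dimension $g$, the same properties transfer to $W$, which completes the argument.

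In truth the genuine mathematical content lives in Theorems \ref{Thm:Eqiv} and \ref{Lee}, so the only place demanding care is the passage from the set-theoretic description of $W$ furnished by Theorem \ref{Thm:Eqiv} to an actual isomorphism of varieties $W\cong\Jac_X$. This is precisely what the closed-immersion property supplies: a closed immersion identifies its (reduced, smooth) source with its scheme-theoretic image, so no separate argument for irreducibility, smoothness, or the dimension count is needed beyond transporting them across the isomorphism. The one point worth stating explicitly in writing is that the map is defined on \emph{all} of $\Pic^{\,d-(p-1)(g-1)}(X)$, i.e. that $g\geq 2$ guarantees stability of each ${F_{X/k}}_*(\mathscr{L})$, so that the image is genuinely all of $W$ and not a proper subset.
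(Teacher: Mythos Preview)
Your proposal is correct and follows essentially the same route as the paper's own proof: use Theorem~\ref{Thm:Eqiv} to identify $W$ set-theoretically with the image of $S^{s}_{\mathrm{Frob}}:\mathfrak{M}^{s}_X(1,d-(p-1)(g-1))\rightarrow\mathfrak{M}^{s}_{X^{(1)}}(p,d)$, then invoke Theorem~\ref{Lee} and the identification $\mathfrak{M}^{s}_X(1,d-(p-1)(g-1))\cong\Jac_X$. Your write-up is more explicit about the degree bookkeeping and the role of $g\geq 2$ in ensuring stability of ${F_{X/k}}_*(\mathscr{L})$, but the argument is the same.
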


\begin{proof}By Theorem \ref{Thm:Eqiv}, we know that $W$ is precisely the image of the morphism $$S^s_{\mathrm{Frob}}:\mathfrak{M}^{s}_X(1,d-(p-1)(g-1))\rightarrow\mathfrak{M}^{s}_{X^{(1)}}(p,d).$$
Then this Corollary follows from Theorem \ref{Lee} and the trivial fact $$\mathfrak{M}^{s}_X(1,-(p-1)(g-1))\cong\Jac_X.$$
\end{proof}

In the case of $(p,r)=(2,2)$, K. Joshi, S. Ramanan, E. Z. Xia and J.-K. Yu \cite[Theorem 4.6.4]{JRXY06} show that the locus of moduli space $\mathfrak{M}^{s}_{X^{(1)}}(2,d)$ consist of stable vector bundles $\E$ whose Frobenius pull back $F^*_{X/k}(\E)$ have maximal Harder-Narasimhan Polygon is a irreducible projective variety of dimension $g$. The Corollary \ref{MaxFrobStr} generalize this result to more general case via different method.

\section{Acknowledgments}

I would like to express my hearty thanks to my advisor Professor X. Sun for helpful discussions.


\begin{thebibliography}{99}

\bibitem{Hartshorne77} R. Hartshorne: \emph{Algebraic geometry}. Graduate Texts in Mathematics 52, Springer Verlag, New York (1977).

\bibitem{JoshiPauly09} K. Joshi, C. Pauly: Hitchin-Mochizuki morphism, Opers and Frobenius-destabilized vector bundles over curves. arXiv: 0912.3602 (2009).

\bibitem{JRXY06} K. Joshi, S. Ramanan, E. Z. Xia, J.-K. Yu: On vector bundles destabilized by Frobenius pull-back. \emph{Compos. Math.} 142 (2006), 616-630.

\bibitem{Katz70} N. M. Katz: Nilpotent connections and the monodromy theorem: Applications of a result of Turrittin. \emph{Publ. Math., Inst. Hautes\'{e}tud. Sci.} 39 (1970), 175¨C232.

\bibitem{LangePauly08} H. Lange, C. Pauly: On Frobenius-destabilized rank-2 vector bundles over curves. \emph{Comment. Math. Helv.} 83 (2008), no. 1, 179-209.

\bibitem{MehtaPauly07} V. Mehta, C. Pauly: Semistability of Frobenius direct images over curves. \emph{Bull. Soc. Math. France.} 135 (2007), no. 1, 105-117.

\bibitem{Osserman06} B. Osserman: The generalized Verschiebung map for curves of genus 2. \emph{Math. Ann.} 336 (2006), 963-986.

\bibitem{Shatz77} S. S. Shatz, The decomposition and specializations of algebraic families of vector bundles. Comp. math. 35 (1977), 163-187.

\bibitem{Sun08} X. Sun: Direct images of bundles under Frobenius morphisms. \emph{Invent. Math.} 173 (2008), no. 2, 427-447.

\end{thebibliography}
\end{document}